\theoremstyle{definition}
 \newtheorem{definition}{Definition}[section]
\theoremstyle{plain}
 \newtheorem{proposition}[definition]{Proposition}
\theoremstyle{plain}
 \newtheorem{theorem}[definition]{Theorem}
\theoremstyle{definition}
 \newtheorem{example}[definition]{Example}
\theoremstyle{plain}
 \newtheorem{lemma}[definition]{Lemma}
\theoremstyle{plain}
\theoremstyle{remark}
 \newtheorem{remark}[definition]{Remark}
\theoremstyle{definition}
\theoremstyle{plain}
\newcommand{\Hom}{\mathrm{Hom}}
\newcommand{\Z}{\mathbb{Z}}
\newcommand{\A}{\Lambda}
\renewcommand{\k}{\Bbbk}
\renewcommand{\1}{\mathbbm{1}}
\title[On Auslander-Reiten components of string complexes]{On Auslander-Reiten components of string complexes for a certain class of symmetric special biserial algebras}
\author{Hern\'an Giraldo}
\address{Instituto de Matem\'aticas, Universidad de Antioquia, Medell\'{\i}n, Colombia}
\email{hernan.giraldo@udea.edu.co}
\author{Ricardo  Rueda-Robayo}
\address{Instituto de Matem\'aticas, Universidad de Antioquia, Medell\'{\i}n, Colombia}
\email{ricardo.rueda@udea.edu.co}
\author{Jos\'e A. V\'elez-Marulanda}
\address{Department of Mathematics, Valdosta State University, Valdosta, GA, United States of America}
\email{javelezmarulanda@valdosta.edu (Corresponding author)}
\thanks{This research was partly supported by the Faculty Scholarship of the Office of Academic Affairs at the Valdosta State University, by CODI and Estrategia de Sostenibilidad 2020-2021 (Universidad de Antioquia, UdeA), and COLCIENCIAS (CONVOCATORIA DOCTORADOS NACIONALES N0. 727 de 2015).}
\begin{document}
\maketitle
\begin{abstract}
Let $\k$ be an algebraically closed field. In this article, inspired by the description of indecomposable objects in the derived category of a gentle algebra obtained by V. Bekkert and H. A. Merklen, we define string complexes for a certain class $\mathscr{C}$ of symmetric special biserial algebras, which are indecomposable perfect complexes in the corresponding derived category. We also prove that if $\A$ is a $\k$-algebra in the class $\mathscr{C}$ and $P^\bullet$ is a string complex over $\A$, then $P^\bullet$ lies in the rim of its Auslander-Reiten component. 
\end{abstract}
\renewcommand{\labelenumi}{\textup{(\roman{enumi})}}
\renewcommand{\labelenumii}{\textup{(\roman{enumi}.\Alph{enumii})}}
\numberwithin{equation}{section}

\section{Introduction}\label{sec1}

Throughout this article, we assume that $\k$ is a fixed algebraically closed field of arbitrary characteristic.  Let $\A$ be an arbitrary but fixed finite dimensional $\k$-algebra. We denote by $\textup{mod}$-$\A$ the abelian category of finitely generated right $\A$-modules, and by $\textup{proj-}\A$ the full subcategory of $\textup{mod-}\A$ whose objects are projective modules. We denote by  $\mathcal{D}^b(\textup{mod-}\A)$ the bounded derived category of $\A$ and by $\mathcal{K}^b(\textup{proj-}\A)$ the full subcategory of $\mathcal{D}^b(\textup{mod-}\A)$ of perfect complexes over $\A$. It follows from \cite[Thm. 1.3]{bekkert-drozd} that $\A$ is either derived tame (in the sense of \cite{geiss2}) or derived wild (see \cite[Def. 1.2]{bekkert-drozd}).  This raises the question of classifying all derived tame algebras up to derived equivalence, which in turn rises the question of the classification of indecomposable objects in derived categories over finite dimensional algebras. In \cite{bekmerk}, V. Bekkert and H. A. Merklen provided a complete classification of the indecomposable objects of  $\mathcal{D}^b(\textup{mod-}\A)$ for when $\A$ is a gentle algebra (as introduced in \cite{assem}) by using so-called string and band complexes.  Later in \cite{bekmerk2},  together with E. N. Marcos, they extended this description in order to classify the indecomposable objects in $\mathcal{D}^b(\textup{mod-}\A)$ for when $\A$ is a skew-gentle algebra (as introduced in \cite{geiss1}).  An important consequence of this approach is that gentle and skew-gentle algebras are derived tame (see \cite[Thm. 4]{bekmerk} and \cite[Cor. 5]{bekmerk2}). This fact was used recently by V. Bekkert together with the first and third authors in \cite[Thm .1.3]{bekkert-giraldo-velez2} to prove that a cycle Nakayama algebra is derived tame if and only if it is either gentle or derived equivalent to a skew-gentle algebra.  Although derived tameness of algebras has been studied by many authors, examples of algebras that are derived tame are scarce in the literature (see e.g. the introduction of \cite{bekkert-drozd-futorny} and the references within).  Recall that $\A$ is said to be self-injective if the regular right $\A$-module $\A_\A$ is injective, and that $\A$ is called a Frobenius algebra provided that the left $\A$-modules ${_\A}\A$ and $(\A_\A)^\ast=\mathrm{Hom}_\k(\A_\A,\k)$ are isomorphic. Recall also that $\A$ is said to be a symmetric algebra provided that $\A$ is Frobenius and that there exists a non-degenerate associative bilinear form $\theta: \A\times \A\to \k$ with $\theta(a,b)=\theta(b,a)$ for all $a,b\in \A$. By \cite
[Prop. 9.9]{curtis}, every Frobenius $\k$-algebra is self-injective, which implies that every symmetric $\k$-algebra is also self-injective. In \cite[Cor. 2.5]{bautista1}, R. Bautista proved that if $\A$ is a self-injective algebra, then $\A$ is either derived discrete (in the sense of \cite{vossieck}) or derived wild. Note that by \cite[Def. 1.2]{bekkert-drozd}, every derived discrete finite dimensional $\k$-algebra is derived tame. On the other hand, it follows by either \cite[Lemma 3.2]{bekkert-giraldo-velez2} or \cite[Prop. 4.1]{czhang} that if $\A$ is a  self-injective Nakayama algebra, then $\A$ is derived tame if and only if it is gentle, and thus in this situation $\A$ is actually derived discrete. Following the classification up to derived equivalence provided by G. Bobi\'nski et al. in \cite{bobin2}, it follows that most self-injective algebras (up to derived equivalence) are derived wild.  Thus, the non-trivial description of indecomposable objects in $\mathcal{D}^b(\textup{mod-}\A)$ for when $\A$ is a non-gentle self-injective algebra is rather a challenging task. On the other hand, in \cite{giraldo-velez}, by using the ideas of Bekkert and Merklen in \cite{bekmerk}, the first and the third author defined string and band complexes for the $\k$-algebra $\A_4$ as in Figure \ref{fig1}, and proved that these string and band complexes are also indecomposable objects in $\mathcal{K}^b(\textup{proj-}\A_4)$. Moreover, by using the results obtained by P. Webb in \cite{webb} that concern complexes over self-injective algebras, they provided a full description of the components of the Auslander-Reiten quiver of $\mathcal{K}^b(\textup{proj-}\A_4)$ that contain either a string or a band complex over $\A_4$. This $\k$-algebra $\A_4$ is of dihedral type (as introduced by K. Erdmann in \cite{erdmann}), and thus it is in particular a symmetric special biserial algebra in the sense of \cite{wald}.


In this article, we define string complexes for symmetric special biserial algebras $\A$ that satisfy the following condition:
\begin{itemize}
\item[(C)] The $\k$-algebra  $\A$ is of the form $\k Q/I$, where the admissible ideal  $I$ of the path algebra $\k Q$ has  a minimal set of generators given by 
\begin{equation*}
\rho =\{\alpha\beta, p_1-p_2\,|\,\alpha, \beta \in Q_1, p_1,p_2\in \mathbf{Pa}_{>1}(\k Q) \text{ with } \mathbf{s}(p_1)=\mathbf{s}(p_2), \mathbf{t}(p_1)=\mathbf{t}(p_2)\}.
\end{equation*} 
\end{itemize}

Note that the symmetric special biserial algebras in Figure \ref{fig1} satisfy the condition (C). Moreover, if $\A$ is a symmetric special biserial algebra, then it follows from \cite{roggen} and \cite{schroll} that $\A$ is also a Brauer graph algebra (see e.g. \cite[\S 2]{schroll2} for the definition), and thus by the discussion in e.g. \cite[\S 2.4]{schroll}, many symmetric special biserial algebras $\A$ satisfy the condition (C). 

On the other hand, it follows by Remark \ref{stringalg} below, that if $\A$ is a special biserial $\k$-algebra, then we can associate to $\A$ a string $\k$-algebra $\widetilde{\A}$.  Thus the non-projective indecomposable $\A$-modules can be described combinatorially by using so-called string and bands for $\widetilde{\A}$; the corresponding modules are called string and band $\A$-modules. We refer the reader to \cite{buri} (see also \cite[Chap. II]{erdmann}) for getting more information regarding the description and the properties of these string and band modules, and  to \cite{krause} for a description of the morphisms between these objects.  

\begin{definition}\label{classC}
We denote by $\mathscr{C}$ the class of symmetric special biserial algebras $\A=\k Q/I$ that satisfy the condition (C) together with the property that every arrow in $Q$ belongs to a unique maximal path in the associated string algebra $\widetilde{\A}$ corresponding to $\A$. 
\end{definition}


\begin{remark}
It is easy to check that all the $\k$-algebras in Figure \ref{fig1} with the exception of $\A_4$ belong to the class $\mathscr{C}$ as in Definition \ref{classC}. 
\end{remark}

Our main result (see Theorem \ref{prop4.2}) gives a version of \cite[Thm. 14]{giraldo-velez} for all symmetric special biserial $\k$-algebras $\A$ that belong to the class $\mathscr{C}$ as in Definition \ref{classC}. More precisely, we prove that if $P^\bullet$ is a string complex over such $\k$-algebra $\A$, then $P^\bullet$ is indecomposable in $\mathcal{K}^b(\textup{proj-}\A)$, and if $\mathfrak{C}$ is the component of the Auslander-Reiten quiver of $\mathcal{K}^b(\textup{proj-}\A)$ containing $P^\bullet$, then $P^\bullet$ lies in the rim of $\mathfrak{C}$.  We next use this to describe the representatives of the orbits of the Auslander-Reiten translation in $\mathfrak{C}$ .  It is important to mention that by \cite[Thm. 3.7]{wheeler} (see also \cite[Thm. 5.4]{happkellrei}), $\mathfrak{C}$ is of the form $\Z\mathbb{A}_\infty$. 

\begin{figure}[htb]
\begin{align*}
Q^{(1)} &=\hspace*{1.5cm} \xymatrix@1@=18pt{
	\underset{0}{\bullet}\ar@<1ex>[r]^{\tau_0}&\underset{1}{\bullet}\ar@<1ex>[l]^{\gamma_1}\ar@<1ex>[r]^{\tau_1}& \underset{2}{\bullet}\ar@<1ex>[l]^{\gamma_2}
	}&
Q^{(2)}(m)&=\hspace*{0.5cm}\xymatrix@1@=18pt{
&\underset{0}{\bullet}\ar@/^/[dl]^{\bar{a}_{m-1}}\ar@/^/[rr]^{a_0}&&\underset{1}{\bullet}\ar@/^/[ll]^{\bar{a}_0}\ar@/^/[dr]^{a_1}&\\
\underset{m-1}{\bullet}\ar@/^/[ur]^{a_{m-1}}\ar@/^/[d]^{\bar{a}_{m-2}}&&&	&\underset{2}{\bullet}\ar@/^/[ul]^{\bar{a}_1}\ar@/^/[d]^{a_2}\\
\underset{m-2}{\bullet}\ar@/^/[u]^{a_{m-2}}\ar@/^/@{.>}[dr]^{}&&&	&\underset{3}{\bullet}\ar@/^/[u]^{\bar{a}_2}\ar@/^/@{.>}[dl]^{}\\
&\underset{\ast}{\bullet}\ar@/^/@{.>}[ul]^{}\ar@/^/@{.>}[rr]^{}&&\underset{\ast}{\bullet}\ar@/^/@{.>}[ll]^{}\ar@/^/@{.>}[ur]^{}&
}\\\\
Q^{(3)} &=\xymatrix@1@=20pt{
	\ar@(ul,dl)_{\zeta_0} \underset{0}{\bullet}\ar@<1ex>[r]^{\tau_0}&\underset{1}{\bullet}\ar@<1ex>[l]^{\gamma_1}\ar@<1ex>[r]^{\tau_1}& \underset{2}{\bullet}
\ar@<1ex>[l]^{\gamma_2}\ar@(ur,dr)^{\zeta_2} 
	}
&
Q^{(4)} &=\hspace*{0.5cm} \xymatrix@1@=18pt{
	\ar@(ul,dl)_{\zeta_0}\underset{0}{\bullet}\ar[rr]^{\tau_0}&&\underset{1}{\bullet}\ar[dl]^{\tau_1}\ar@(ur,dr)^{\zeta_1}\\
	&\underset{2}{\bullet}\ar[ul]^{\tau_2}\ar@(dl,dr)_{\zeta_2}&\\
	}
\end{align*}
\caption{Quivers of special biserial algebras satisfying condition (C).}\label{fig0}
\end{figure}
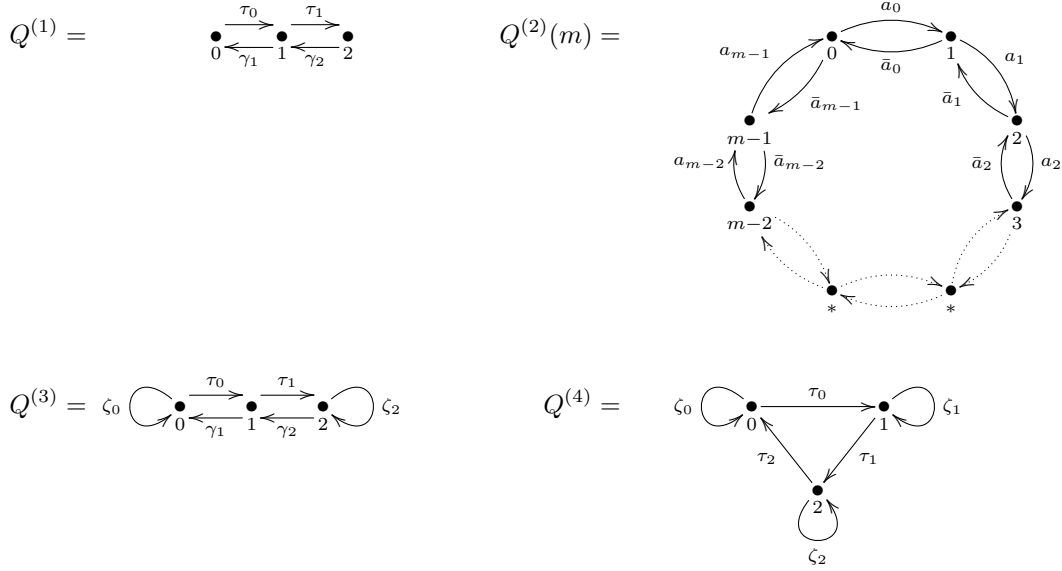

\begin{figure}[htb]
\begin{align*}
\A_1 &= \k Q^{(1)}/\langle \tau_0\tau_1, \gamma_2\gamma_1, \gamma_1\tau_0-\tau_1\gamma_2\rangle\\
\A_{2,m}&= \k Q^{(2)}(m)/ \langle a_ia_{i+1}, \bar{a}_i\bar{a}_{i-1}, a_i\bar{a}_i-\bar{a}_{i-1}a_{i-1} :i\in \Z/m\rangle, \text{and $m\geq 1$}\\
\A_3&= \k Q^{(3)}/\langle \gamma_1\zeta_0,\zeta_0\tau_0,\tau_0\tau_1,\gamma_2\gamma_1,\tau_1\zeta_2,\zeta_2\gamma_2,\tau_0\gamma_1-\zeta_0^2,
\gamma_2\tau_1-\zeta_2^2,\gamma_1\tau_0-\tau_1\gamma_2\rangle\\
\A_4&= \k Q^{(4)}/\langle \zeta_0\tau_0,\tau_0\zeta_1,\zeta_1\tau_1,\tau_1\zeta_2,\zeta_2\tau_2,\tau_2\zeta_0,\tau_0\tau_1\tau_2-\zeta_0^2, 
\tau_1\tau_2\tau_0-\zeta_1^2, \tau_2\tau_0\tau_1-\zeta_2^2\rangle
\end{align*}
\caption{Examples of special biserial algebras satisfying condition (C).}\label{fig1}
\end{figure}
 
This article is organized as follows. In \S \ref{sec2}, we recall the definitions of special biserial and string algebras, the definition and some properties of Auslander-Reiten triangles, and summarize the properties of  the components in the Auslander-Reiten quiver of the category of perfect complexes over a symmetric algebra from P. Webb's preprint \cite{webb} and from W.W. Wheeler's article \cite{wheeler}. In \S \ref{sec3}, we define string complexes for the symmetric special biserial algebras $\A$ that satisfy the condition (C). Finally, in \S \ref{sec4}, we prove Theorem \ref{prop4.2}.

This article also constitutes the doctoral dissertation of the second author under the supervision of the other two.

\section{Preliminares}\label{sec2}
Throughout this section we keep the notation introduced in \S \ref{sec1}, i.e. $\k$ is an algebraically closed field and $\A$ is a a finite dimensional $\k$-algebra. If $f:X\to Y$ and $g:Y\to Z$ are morphisms in a category $\mathcal{C}$, we denote by $fg$ the composition of $f$ with $g$.

\subsection{Quiver with relations, and path algebras}
Recall that a quiver $Q$ is a directed graph with a set of vertices $Q_0$, a set of arrows $Q_1$ and two functions $\mathbf{s},\mathbf{t}:Q_1\to Q_0$, where for all $\alpha\in Q_1$, $\mathbf{s}\alpha$ (resp. $\mathbf{t}\alpha$) denotes the vertex where $\alpha$ starts (resp. ends). A path in $Q$ of length $n\geq 1$ is an ordered sequence of arrows $w=\alpha_1\cdots\alpha_n$ with $\mathbf{t}\alpha_j=\mathbf{s}\alpha_{j+1}$ for $1\leq j <n$. In particular, we write paths from left to right. Additionally, for each $v\in Q_0$, we have a trivial path $\1_v$ of length zero with $\mathbf{s}\1_v=v=\mathbf{t}\1_v$. For a non-trivial path $w=\alpha_1\cdots \alpha_n$ we define $\mathbf{s}w=\mathbf{s}\alpha_1$ and $\mathbf{t}w=\mathbf{t}\alpha_n$.   A non-trivial path $w$ in $Q$ is said to be an oriented cycle provided that $\mathbf{s}w=\mathbf{t}w$. The path algebra $\k Q$ of a quiver $Q$ is the $\k$-vector space whose basis consists of all the paths in $Q$, and for two paths $w$ and $w'$, their multiplication is given by the concatenation $ww'$ provided that $\mathbf{t}w=\mathbf{s}w'$, or zero otherwise. Let $J$ be the two-sided ideal of $\k Q$ generated by all the arrows in $Q$. We say that an ideal $I$ of $\k Q$ is admissible if there exists $d\geq 2$ such that $J^d\subseteq I \subseteq J^2$. In this situation, the quotient $\k Q/ I$ is a finite dimensional $\k$-algebra. If $w$ is a path in $Q$, we denote also by $w$ its equivalence class in $\k Q/I$. In particular, a path $w$ in $\k Q/I$ is a {\it zero-path} if and only if $w$ belongs to $I$. We say that a non-zero path $w$ in $\k Q/ I$ is {\it maximal} if for all arrows $\alpha, \beta\in Q_1$ such that $\mathbf{t}\alpha= \mathbf{s}w$ and  $\mathbf{s}\beta=\mathbf{t}w$, we have that $\alpha w \beta $ is a zero path in $\k Q/ I$. 

From now on we assume that $\A = \k Q/ I$, where $Q$ is a finite quiver and $I$ is an admissible ideal of $\k Q$. 
For each $v\in Q_0$, we denote also by $\1_v$ the corresponding primitive idempotent in $\A$, by $S_v$ the corresponding simple right $\A$-module and by $\mathbf{P}_v$ the corresponding indecomposable projective right $\A$-module, i.e., $\mathbf{P}_v = \1_v\A$. We denote by $\mathbf{Pa}(\A)$ the sets of all non-zero paths, and for all integers $n\geq 0$, we denote  by $\mathbf{Pa}_{>n}(\A)$ the set of all paths whose length is greater than $n$. 
\begin{remark}\label{projmorph}
Let $w \in \mathbf{Pa}(\A)$. Then $w$ induces a morphism $p(w)$ of right $\A$-modules from $\mathbf{P}_{\mathbf{s}w}$ to $\mathbf{P}_{\mathbf{t}w}$ defined as $p(w)(u) = wu$ for all $u\in \mathbf{P}_{\mathbf{s}w}$. Moreover, the $\k$-vector space $\Hom_\A(\mathbf{P}_{\mathbf{s}w}, \mathbf{P}_{\mathbf{t}w})$ is generated by morphisms of this kind. 
\end{remark}

\subsection{Biserial, special biserial and string algebras}\label{gentle}

Following \cite{skow2}, $\A$ is a {\it biserial}  $\k$-algebra provided that the radical of any indecomposable non-uniserial projective, left or right, $\A$-module is a sum of two uniserial submodules whose intersection is simple or zero. On the other hand, following \cite{wald} (see also \cite{skow2}), $\A$ is {\it special biserial} if the following conditions are satisfied:
\begin{itemize}
\item[(SB1)] For any vertex $v\in Q$, there are at most two arrows ending at $v$, and at most two arrows starting at $v$.
\item[(SB2)] Given an arrow $\alpha \in Q_1$, there is a most an arrow $\beta$ with $\mathbf{s}\beta = \mathbf{t}\alpha$ such that  $\alpha\beta\not\in I$, and there is a most an arrow $\gamma$ with $\mathbf{s}\alpha = \mathbf{t}\gamma$ such that  $\gamma\alpha\not\in I$.
\end{itemize}

By \cite[Lemma 1]{skow2}, any special biserial algebra is biserial. However, if $\A=\k Q/I$ where 
\begin{align*}
Q=\xymatrix@1@=20pt{
\underset{1}{\bullet}\ar[rr]^{\alpha}&&\underset{2}{\bullet}\ar[rr]^{\beta}\ar[dr]_{\gamma}&&\underset{1}{\bullet}\ar[rr]^{\epsilon}&&\underset{5}{\bullet}\\
&&&\underset{3}{\bullet}\ar[ur]_{\delta}&&&
}
&&\text{and}&&
I=\langle\beta\epsilon, \alpha\beta-\alpha\gamma\delta\rangle,
\end{align*} 
\noindent
then by the arguments in \cite[pg. 175]{skow2}, $\A$ provides an example of a biserial $\k$-algebra that is not special biserial. 

We say that $\A$ is a {\it string algebra} provided that $I$ is generated by only zero-relations and in addition satisfies the above conditions (SB1) and (SB2). 

\begin{remark}\label{stringalg}
By e.g. \cite[\S II.1.3]{erdmann}, in order to study indecomposable non-projective right $\A$-modules and irreducible morphisms over a special biserial algebra $\A$, we can always do this by looking at the quotient algebra $\widetilde{\A}=\A/S_0$, where $S_0 = \bigoplus_{v\in L}\mathrm{soc}\, \A\1_v$  and $L=\{v\in Q_0: \A\1_v \text{ is injective and not uniserial}\}$. In this situation, it follows that $\widetilde{\A}$ is a string algebra and we call it the {\it associated string algebra} of $\A$. In particular, if all the indecomposable injective left $\A$-modules are all non-uniserial, then $S_0$ is the socle of $\A$, and thus  $\widetilde{\A}=\A/\mathrm{soc}\,\A$.  
\end{remark}

\begin{example}\label{exam1}
Let consider the special biserial algebras as in Figure \ref{fig1}. Then 
\begin{align*}
\widetilde{\A}_1 &= \k Q^{(1)}/\langle \tau_0\tau_1, \gamma_2\gamma_1, \gamma_1\tau_0,\tau_1\gamma_2\rangle\\
\widetilde{\A}_{2,m}&= \k Q^{(2)}(m)/ \langle a_ia_{i+1}, \bar{a}_i\bar{a}_{i-1}, a_i\bar{a}_i,\bar{a}_{i-1}a_{i-1} :i\in \Z/m\rangle, \text{and $m\geq 1$}\\
\widetilde{\A}_3&= \k Q^{(3)}/\langle \gamma_1\zeta_0,\zeta_0\tau_0,\tau_0\tau_1,\gamma_2\gamma_1,\tau_1\zeta_2,\zeta_2\gamma_2,\tau_0\gamma_1,\zeta_0^2,
\gamma_2\tau_1,\zeta_2^2,\gamma_1\tau_0,\tau_1\gamma_2\rangle\\
\widetilde{\A}_4&= \k Q^{(4)}/\langle \zeta_0\tau_0,\tau_0\zeta_1,\zeta_1\tau_1,\tau_1\zeta_2,\zeta_2\tau_2,\tau_2\zeta_0,\tau_0\tau_1\tau_2,\zeta_0^2, 
\tau_1\tau_2\tau_0,\zeta_1^2, \tau_2\tau_0\tau_1,\zeta_2^2\rangle
\end{align*}
\end{example}



\subsection{Auslander-Reiten components containing perfect complexes over symmetric algebras}

As stated before, we denote by $\mathcal{D}^b(\textup{mod-}\A)$ the bounded derived category of $\A$. We denote by  $\mathcal{K}^{-,b}(\textup{proj-}\A)$ the category of bounded above complexes whose terms are in proj-$\A$, with at most finitely many non-zero cohomology groups, and by $\mathcal{K}^b(\textup{proj-}\A)$ the homotopy category of perfect complexes over $\A$.  We denote by $T$ the shifting functor on $\mathcal{D}^b(\textup{mod-}\A)$ (resp. $\mathcal{K}^b(\textup{proj-}\A)$, resp. $\mathcal{K}^b(\textup{proj-}\A)$) i.e., $T$ shifts complexes one place to the left and changes the sign of the differential (see e.g. \cite[Chap. I]{hartshorne}). It is well-known that $\mathcal{D}^b(\textup{mod-}\A)$ (resp. $\mathcal{K}^b(\textup{proj-}\A)$, resp. $\mathcal{K}^{-,b}(\textup{proj-}\A)$) is a triangulated category in the sense of \cite{verdier}, and that $\mathcal{D}^b(\textup{mod-}\A)$ is equivalent to $\mathcal{K}^{-,b}(\textup{proj-}\A)$ as triangulated categories. Following \cite[Chap. I, \S 4]{happel}, a distinguished triangle $X^\bullet\xrightarrow{u^\bullet} Y^\bullet\xrightarrow{v^\bullet}Z\xrightarrow{w^\bullet} T(X^\bullet)$ in $\mathcal{D}^b(\textup{mod}\,\A)$ is called an {\it Auslander-Reiten triangle} if the following 
conditions are satisfied.
\begin{enumerate}
\item The objects $X^\bullet$ and $Z^\bullet$ are indecomposable.
\item The morphism $w^\bullet$ is non-zero.
\item If $f^\bullet: W^\bullet\to Z^\bullet$ is not a retraction, then there exists $f'^\bullet:W\to Y^\bullet$ such that $v^\bullet f'^\bullet=f^\bullet$.
\end{enumerate}

\begin{remark} 
From now on we assume that $\A$ is a symmetric $\k$-algebra. 
\end{remark}

It follows by e.g \cite[Prop. 3.8(b)]{auslander} that the Nakayama functor $\nu_\A= D\mathrm{Hom}_{\A}(-,\A)$, where $D=\Hom_\k(-,\k)$,  is naturally equivalent to the identity functor. Moreover, it also follows from the results in \cite{happel2} that for all indecomposable 
objects $Z^\bullet$ in $\mathcal{K}^{-,b}(\textup{proj-}\A)$, there exists an Auslander-Reiten triangle ending in $Z^\bullet$ if and only if $Z^\bullet$ is an object of $\mathcal{K}^b(\textup{proj-}\A)$, and  
this triangle is of the form  $T^{-1}(Z^\bullet)\to Y^\bullet\to Z^\bullet\to Z^\bullet$. Therefore, we can assume that every Auslander-Reiten triangle in $\mathcal{K}^b(\textup{proj-}\A)$ is isomorphic 
to 
\begin{equation*}
T^{-1}(Z^\bullet)\to T^{-1}(\mathrm{cone}(h^\bullet))\to Z^\bullet\xrightarrow{h^\bullet} Z^\bullet,
\end{equation*}
for some object $Z^\bullet$ in $\mathcal{K}^b(\textup{proj-}\A)$ and some morphism $h^\bullet:Z^\bullet\to Z^\bullet$, where $\mathrm{cone}(h^\bullet)$ denotes the mapping cone of $h^\bullet$. 
We denote by $\Gamma(\mathcal{K}^b(\textup{proj-}\A))$ the Auslander-Reiten quiver of  $\mathcal{K}^b(\textup{proj-}\A)$. We say that a complex $Z^\bullet$ lies on the {\it rim} of its component in $\Gamma(\mathcal{K}^b(\textup{proj-}\A))$, if in the Auslander-Reiten triangle $X^\bullet\to Y^\bullet\to Z^\bullet\to TX^\bullet$, the complex $Y^\bullet$ is indecomposable. As stated before, it follows from \cite[Thm. 3.7]{wheeler} (see also \cite[Thm. 5.4]{happkellrei}) that if $\mathfrak{C}$ is a 
connected component of $\Gamma(\mathcal{K}^b(\textup{proj-}\A))$, then $\mathfrak{C}$ is of the form $\mathbb{Z}\mathbb{A}_\infty$. Thus, the component $\mathfrak{C}$ of $
\Gamma(\mathcal{K}^b(\textup{proj-}\A))$ with a complex $C_0^\bullet$ lying on its rim looks as in Figure \ref{fig5}.

\begin{figure}
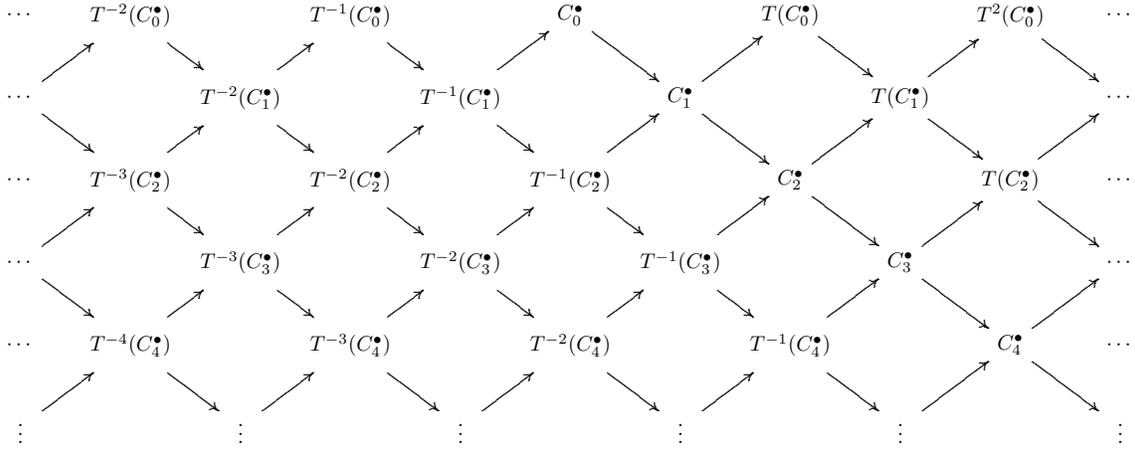

\scalebox{0.8}{
$
\begindc{\commdiag}[130]
\obj(-20,7)[2]{$\cdots$}
\obj(-16,7)[3]{$T^{-2}(C_0^\bullet)$}
\obj(-8,7)[4]{$T^{-1}(C_0^\bullet)$}
\obj(0,7)[5]{$C_0^\bullet$}
\obj(8,7)[6]{$T(C_0^\bullet)$}
\obj(16,7)[7a]{$T^2(C_0^\bullet)$}
\obj(20,7)[7b]{$\cdots$}

\obj(-20,4)[7c]{$\cdots$}
\obj(-12,4)[8]{$T^{-2}(C_1^\bullet)$}
\obj(-4,4)[9]{$T^{-1}(C_1^\bullet)$}
\obj(4,4)[10]{$C_1^\bullet$}
\obj(12,4)[11]{$T(C_1^\bullet)$}
\obj(20,4)[11a]{$\cdots$}
\mor{11}{7a}{}
\mor{7a}{11a}{}
\mor{3}{8}{}
\mor{7c}{3}{}
\obj(-20,1)[12a]{$\cdots$}
\obj(-16,1)[12]{$T^{-3}(C_2^\bullet)$}
\obj(-8,1)[13]{$T^{-2}(C_2^\bullet)$}
\obj(0,1)[14]{$T^{-1}(C_2^\bullet)$}
\obj(8,1)[15]{$C_2^\bullet$}
\obj(16,1)[16]{$T(C_2^\bullet)$}
\obj(20,1)[16a]{$\cdots$}

\mor{7c}{12}{}
\mor{12}{8}{}
\mor{11}{16}{}
\mor{16}{11a}{}
\obj(-20,-2)[17a]{$\cdots$}
\obj(-12,-2)[17]{$T^{-3}(C_3^\bullet)$}
\obj(-4,-2)[18]{$T^{-2}(C_3^\bullet)$}
\obj(4,-2)[19]{$T^{-1}(C_3^\bullet)$}
\obj(12,-2)[20]{$C_3^\bullet$}
\obj(20,-2)[20a]{$\cdots$}
\mor{17a}{12}{}
\mor{12}{17}{}
\mor{20}{16}{}
\mor{16}{20a}{}


\obj(-20,-5)[21a]{$\cdots$}
\obj(-16,-5)[21]{$T^{-4}(C_4^\bullet)$}
\obj(-8,-5)[22]{$T^{-3}(C_4^\bullet)$}
\obj(0,-5)[23]{$T^{-2}(C_4^\bullet)$}
\obj(8,-5)[24]{$T^{-1}(C_4^\bullet)$}
\obj(16,-5)[25]{$C_4^\bullet$}
\obj(20,-5)[25a]{$\cdots$}
\mor{17a}{21}{}
\mor{21}{17}{}
\mor{20}{25}{}
\mor{25}{20a}{}
\obj(-20,-8)[26a]{$\vdots$}
\obj(-12,-8)[26]{$\vdots$}
\obj(-4,-8)[27]{$\vdots$}
\obj(4,-8)[28]{$\vdots$}
\obj(12,-8)[29]{$\vdots$}
\obj(20,-8)[29a]{$\vdots$}
\mor{26a}{21}{}
\mor{21}{26}{}
\mor{29}{25}{}
\mor{25}{29a}{}



\mor{8}{4}{}
\mor{4}{9}{}
\mor{9}{5}{}
\mor{5}{10}{}
\mor{10}{6}{}
\mor{6}{11}{}
\mor{8}{13}{}
\mor{13}{9}{}
\mor{9}{14}{}
\mor{14}{10}{}
\mor{10}{15}{}
\mor{15}{11}{}
\mor{17}{13}{}
\mor{13}{18}{}
\mor{18}{14}{}
\mor{14}{19}{}
\mor{19}{15}{}
\mor{15}{20}{}
\mor{17}{22}{}
\mor{22}{18}{}
\mor{18}{23}{}
\mor{23}{19}{}
\mor{19}{24}{}
\mor{24}{20}{}
\mor{26}{22}{}
\mor{22}{27}{}
\mor{27}{23}{}
\mor{23}{28}{}
\mor{28}{24}{}
\mor{24}{29}{}
\enddc
$
}
\caption{Component $\mathfrak{C}$ of $\Gamma(\mathcal{K}^b(\textup{proj-}\A))$ near the complex $C_0^\bullet$.}\label{fig5}
\end{figure}
\noindent

\begin{remark}\label{remmin}
If $P^\bullet$ is an object in $\mathcal{K}^b(\textup{proj-}\A)$, then there exists $m\geq 0$ such that 
\begin{equation}\label{complex}
P^\bullet = \cdots\to 0\to P^n\xrightarrow{\delta_{P}^n}P^{n+1}\to\cdots \to P^{n+m-1}\xrightarrow{\delta_{P}^{n+m-1}}P^{n+m}\to 0\to \cdots, 
\end{equation}
and $\delta_{P}^i\delta_{P}^{i+1}=0$ for all $i\in \Z$. 
Without loss of generality, we can assume that $P^\bullet$ is {\it minimal}, i.e., for all $n\leq i\leq n+m-1$, $\mathrm{im}(\delta^i_{P})\subseteq \mathrm{rad}(P^{i+1})$, for every complex in $\mathcal{K}^b(\textup{proj-}\A)$ can be written as the sum of one complex having this property and another one whose differential maps are either zero or isomorphisms (see e.g. \cite[Thm. 5]{giraldo-merklen}).  
\end{remark}
\begin{remark}\label{rem2.2}
Since $\A$ is a symmetric algebra, it follows that for all projective right $\A$-modules $P$, there is an isomorphism between the right $\A$-modules $\mathrm{top}\,P=P/\mathrm{rad}\,P$ and $\mathrm{soc}\, P$.  In this situation, we denote by $f_P:P\to P$ the map that sends isomorphically the top of each indecomposable summand of $P$ to its corresponding socle. 
\end{remark}

The following result follows from \cite[Lemma 2.4]{wheeler} and from the fact that $\A$ is a symmetric $\k$-algebra.
\begin{lemma}\label{lem4.1}
Let $P^\bullet$ be a non-zero indecomposable object in $\mathcal{K}^b(\textup{proj-}\A)$, and let $\ell$ the largest index such that $P^\ell\not=0$. If a triangle
\begin{equation*}
T^{-1}(P^\bullet)\to Q^\bullet\to P^\bullet \xrightarrow{h^\bullet} P^\bullet
\end{equation*}
is an Auslander-Reiten triangle, then $h^\bullet$ is homotopic to a morphism $f^\bullet:P^\bullet \to P^\bullet$ such that $f^j= 0$ for $j\not=\ell$ and $f^\ell=f_{P^\ell}$, where $f_{P^\ell}$ is as in Remark \ref{rem2.2}.
\end{lemma}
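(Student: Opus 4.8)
The plan is to deduce the statement from Wheeler's description of the connecting morphism of an Auslander–Reiten triangle, combined with the symmetric identification $\nu_\A\cong\mathrm{id}$. First I would record the shape of the triangle. Since $\A$ is symmetric, the Nakayama functor $\nu_\A=D\Hom_\A(-,\A)$ is naturally equivalent to the identity, so the Auslander–Reiten translate of a perfect complex is simply $T^{-1}$ and, as already noted in the discussion preceding the lemma, the triangle ending in $P^\bullet$ has the form $T^{-1}(P^\bullet)\to Q^\bullet\to P^\bullet\xrightarrow{h^\bullet}P^\bullet$. In particular $h^\bullet$ is an endomorphism of $P^\bullet$ in $\mathcal{K}^b(\textup{proj-}\A)$ which, by axiom (2) of an Auslander–Reiten triangle, is non-zero in the homotopy category; since $P^\bullet$ is indecomposable its endomorphism ring is local, so $h^\bullet$ lies in $\rad\,\End_{\mathcal{K}^b(\textup{proj-}\A)}(P^\bullet)$.

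Next I would invoke \cite[Lemma 2.4]{wheeler}, which yields an explicit homotopy representative of this connecting morphism that is concentrated in the top degree $\ell$ and there given by a Nakayama-twisted top-to-socle map, the components in all degrees $j\neq\ell$ being zero; the reason for the rigidity of this top degree is that no higher term of the complex is available to absorb a homotopy. Specializing the natural equivalence $\nu_\A\cong\mathrm{id}$ then identifies the degree-$\ell$ component with the canonical map $f_{P^\ell}$ of Remark \ref{rem2.2}, which sends the top of each indecomposable summand of $P^\ell$ isomorphically onto its socle. This produces the candidate endomorphism $f^\bullet$ with $f^\ell=f_{P^\ell}$ and $f^j=0$ for $j\neq\ell$. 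I would then check that $f^\bullet$ is a genuine chain map: the only non-trivial commutativity occurs in degree $\ell-1$, where one needs the composite $P^{\ell-1}\xrightarrow{\delta_P^{\ell-1}}P^\ell\xrightarrow{f_{P^\ell}}P^\ell$ to vanish. After replacing $P^\bullet$ by a minimal complex as in Remark \ref{remmin}, one has $\mathrm{im}(\delta_P^{\ell-1})\subseteq\rad(P^\ell)$, and $\rad(P^\ell)$ is annihilated by $f_{P^\ell}$ because the latter factors through $\mathrm{top}\,P^\ell$; hence the composite is zero.

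Finally I would pin down the identification $h^\bullet\simeq f^\bullet$. Since $h^\bullet$ is non-zero in $\mathcal{K}^b(\textup{proj-}\A)$ and both morphisms arise from Wheeler's top-degree representative, $f^\bullet$ is likewise non-zero; by Serre duality the local algebra $\End_{\mathcal{K}^b(\textup{proj-}\A)}(P^\bullet)$ is Frobenius with one-dimensional socle, which $h^\bullet$ spans, so after rescaling the isomorphism defining $f_{P^\ell}$ one obtains $h^\bullet\simeq f^\bullet$. The main obstacle I anticipate is the bookkeeping in matching Wheeler's general formula for the connecting morphism — phrased in terms of $\nu_\A$ and Serre duality for perfect complexes — to the concrete map $f_{P^\ell}$ after the symmetric identification, and in particular justifying that the homotopy representative may be taken minimal and supported solely in the top degree $\ell$; once this is in place, the remaining verifications are the short radical/socle computations above.
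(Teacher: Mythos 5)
Your proposal is correct and follows essentially the same route as the paper, whose entire proof is the observation that the lemma ``follows from \cite[Lemma 2.4]{wheeler} and from the fact that $\A$ is a symmetric $\k$-algebra''; you invoke exactly these two ingredients, with the Nakayama identification $\nu_\A\cong\mathrm{id}$ converting Wheeler's top-degree representative into the map $f_{P^\ell}$ of Remark \ref{rem2.2}. Your additional verifications (the chain-map check via minimality as in Remark \ref{remmin}, and the socle argument fixing $h^\bullet\simeq f^\bullet$ up to rescaling) are sound details that the paper leaves implicit.
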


\subsection{Webb's cohomology diagrams for Auslander-Reiten components}
Consider the description of the component $\mathfrak{C}$  of the Auslander-Reiten quiver containing $C_0^\bullet$ as in Figure \ref{fig5}. Following \cite{webb}, after taking cohomology groups, we obtain the induced diagram of right $\A$-modules shown in Figure \ref{fig6}, which Webb calls the {\it cohomology diagram} of $\mathfrak{C}$. 
  
\begin{figure}
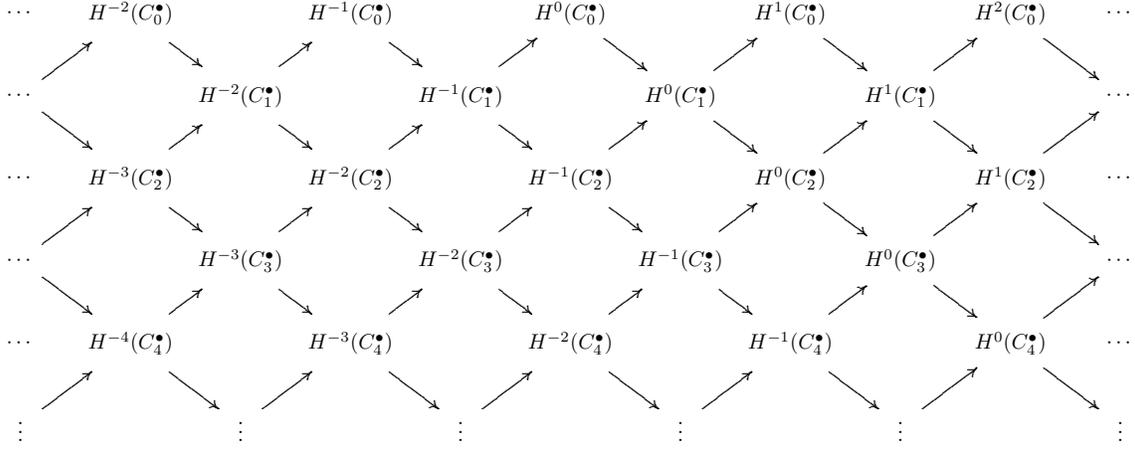

\scalebox{0.8}{
$
\begindc{\commdiag}[130]
\obj(-20,7)[2]{$\cdots$}
\obj(-16,7)[3]{$H^{-2}(C_0^\bullet)$}
\obj(-8,7)[4]{$H^{-1}(C_0^\bullet)$}
\obj(0,7)[5]{$H^0(C_0^\bullet)$}
\obj(8,7)[6]{$H^1(C_0^\bullet)$}
\obj(16,7)[7a]{$H^2(C_0^\bullet)$}
\obj(20,7)[7b]{$\cdots$}

\obj(-20,4)[7c]{$\cdots$}
\obj(-12,4)[8]{$H^{-2}(C_1^\bullet)$}
\obj(-4,4)[9]{$H^{-1}(C_1^\bullet)$}
\obj(4,4)[10]{$H^0(C_1^\bullet)$}
\obj(12,4)[11]{$H^1(C_1^\bullet)$}
\obj(20,4)[11a]{$\cdots$}
\mor{11}{7a}{}
\mor{7a}{11a}{}
\mor{3}{8}{}
\mor{7c}{3}{}
\obj(-20,1)[12a]{$\cdots$}
\obj(-16,1)[12]{$H^{-3}(C_2^\bullet)$}
\obj(-8,1)[13]{$H^{-2}(C_2^\bullet)$}
\obj(0,1)[14]{$H^{-1}(C_2^\bullet)$}
\obj(8,1)[15]{$H^0(C_2^\bullet)$}
\obj(16,1)[16]{$H^1(C_2^\bullet)$}
\obj(20,1)[16a]{$\cdots$}

\mor{7c}{12}{}
\mor{12}{8}{}
\mor{11}{16}{}
\mor{16}{11a}{}
\obj(-20,-2)[17a]{$\cdots$}
\obj(-12,-2)[17]{$H^{-3}(C_3^\bullet)$}
\obj(-4,-2)[18]{$H^{-2}(C_3^\bullet)$}
\obj(4,-2)[19]{$H^{-1}(C_3^\bullet)$}
\obj(12,-2)[20]{$H^{0}(C_3^\bullet)$}
\obj(20,-2)[20a]{$\cdots$}
\mor{17a}{12}{}
\mor{12}{17}{}
\mor{20}{16}{}
\mor{16}{20a}{}


\obj(-20,-5)[21a]{$\cdots$}
\obj(-16,-5)[21]{$H^{-4}(C_4^\bullet)$}
\obj(-8,-5)[22]{$H^{-3}(C_4^\bullet)$}
\obj(0,-5)[23]{$H^{-2}(C_4^\bullet)$}
\obj(8,-5)[24]{$H^{-1}(C_4^\bullet)$}
\obj(16,-5)[25]{$H^0(C_4^\bullet)$}
\obj(20,-5)[25a]{$\cdots$}
\mor{17a}{21}{}
\mor{21}{17}{}
\mor{20}{25}{}
\mor{25}{20a}{}
\obj(-20,-8)[26a]{$\vdots$}
\obj(-12,-8)[26]{$\vdots$}
\obj(-4,-8)[27]{$\vdots$}
\obj(4,-8)[28]{$\vdots$}
\obj(12,-8)[29]{$\vdots$}
\obj(20,-8)[29a]{$\vdots$}
\mor{26a}{21}{}
\mor{21}{26}{}
\mor{29}{25}{}
\mor{25}{29a}{}



\mor{8}{4}{}
\mor{4}{9}{}
\mor{9}{5}{}
\mor{5}{10}{}
\mor{10}{6}{}
\mor{6}{11}{}
\mor{8}{13}{}
\mor{13}{9}{}
\mor{9}{14}{}
\mor{14}{10}{}
\mor{10}{15}{}
\mor{15}{11}{}
\mor{17}{13}{}
\mor{13}{18}{}
\mor{18}{14}{}
\mor{14}{19}{}
\mor{19}{15}{}
\mor{15}{20}{}
\mor{17}{22}{}
\mor{22}{18}{}
\mor{18}{23}{}
\mor{23}{19}{}
\mor{19}{24}{}
\mor{24}{20}{}
\mor{26}{22}{}
\mor{22}{27}{}
\mor{27}{23}{}
\mor{23}{28}{}
\mor{28}{24}{}
\mor{24}{29}{}
\enddc
$
}
\caption{Cohomology diagram for the component $\mathfrak{C}$ as in Figure \ref{fig5}.}\label{fig6}
\end{figure}

The following result follows from \cite[Thm. 6.5 , Thm. 6.6, Cor. 6.7 \& Cor. 6.10]{webb}. 

\begin{theorem}\label{webb}
Let $P^\bullet$ be an indecomposable complex in  $\mathcal{K}^b(\textup{proj-}\A)$ and let  $\mathfrak{C}$ be its corresponding component in $\Gamma(\mathcal{K}^b(\textup{proj-}\A))$. 
\begin{enumerate}
\item If $P^\bullet$ is not a stalk complex corresponding to an indecomposable projective right $\A$-module,  then the cohomology diagram of $\mathfrak{C}$ looks like as in Figure \ref{fig7}, where $A_0$ (resp. $B_0$) is the non-zero cohomology group of $P^\bullet$ of lowest (resp. higher) degree.   
\item If $P^\bullet$ is a stalk complex corresponding to the projective cover $P_S$ of a simple right $\A$-module $S$, with $P_S\not=S$, then the cohomology diagram of $\mathfrak{C}$ looks like as in Figure \ref{fig8}, where $\mathrm{H}(P_S)$ denotes the heart of $P_S$, i.e. $\mathrm{H}(P_S)= \mathrm{rad}\, P_S/\mathrm{soc}\, P_S$. Morever, if $\mathfrak{C}$ is as in Figure \ref{fig5}, then $C_0^\bullet = P_S$, and for all $n\geq 1$, $C_n^\bullet$  is the complex
\begin{equation}\label{stringcomplexsimple} 
0\to P^{-n}\xrightarrow{\delta_P^{-n}}\cdots \to P^{-2}\xrightarrow{\delta_P^{-2}}P^{-1}\xrightarrow{\delta_P^{-1}}P^0\to 0,
\end{equation}
where for all $0\leq j\leq n$, $P^{-j}=P_S$ and $\delta_P^{-j}= f_{P_S}$, where $f_{P_S}$ is as in Remark \ref{rem2.2}. 
\item If $P^\bullet$ lies in the rim of $\mathfrak{C}$ and has length $t$, then the complexes in $\mathfrak{C}$ at a distance $r$ from the rim have length $t+r$.  
\item If $P^\bullet$ has exactly two non-zero terms and $P^\bullet$ is not  (up to shifting) as in (\ref{stringcomplexsimple}) for $n=1$, then $P^\bullet$ lies in the rim of $\mathfrak{C}$. 
\end{enumerate} 
\end{theorem}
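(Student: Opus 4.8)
The plan is to derive each of the four assertions from the corresponding statement in P. Webb's preprint \cite{webb}, translated into the present notation. Throughout I would keep in mind that, by \cite[Thm. 3.7]{wheeler}, the component $\mathfrak{C}$ has shape $\Z\mathbb{A}_\infty$, and that since $\A$ is symmetric the Nakayama functor is the identity, so every Auslander-Reiten triangle in $\mathcal{K}^b(\textup{proj-}\A)$ has the special form $T^{-1}(Z^\bullet)\to T^{-1}(\mathrm{cone}(h^\bullet))\to Z^\bullet\xrightarrow{h^\bullet} Z^\bullet$ recorded above. The central observation is that each $H^n$ is a cohomological functor on the triangulated category, so applying it to these triangles turns the mesh relations of $\Z\mathbb{A}_\infty$ into long exact sequences of cohomology groups; it is precisely the assembly of these long exact sequences that produces the cohomology diagram of Figure \ref{fig6}.

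For part (1), I would invoke Webb's structural description \cite[Thm. 6.5 \& Thm. 6.6]{webb} of the cohomology diagram of a component whose complex is not a projective stalk. The key point to extract is that the cohomology groups appearing in $\mathfrak{C}$ are completely governed by the two extreme cohomology groups of $P^\bullet$, namely $A_0$ in lowest degree and $B_0$ in highest degree; propagating these through the long exact sequences attached to the meshes forces the pattern displayed in Figure \ref{fig7}. For part (2), the projective-stalk case is genuinely different because the cohomology of $P_S$ is concentrated in a single degree. Here I would use \cite[Cor. 6.7]{webb} together with Remark \ref{rem2.2}: since $\A$ is symmetric there is an isomorphism $\mathrm{top}\,P_S\cong\mathrm{soc}\,P_S$ and hence a canonical map $f_{P_S}\colon P_S\to P_S$, and iterating $f_{P_S}$ yields exactly the complexes (\ref{stringcomplexsimple}). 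One then checks that these are the complexes $C_n^\bullet$ of $\mathfrak{C}$ by computing their mapping cones and verifying that the resulting triangles are the Auslander-Reiten triangles of the component, with the heart $\mathrm{H}(P_S)=\rad\,P_S/\mathrm{soc}\,P_S$ emerging as the relevant cohomology.

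For part (3), I would argue that a single step away from the rim is realized, via the Auslander-Reiten triangle, by passing to the mapping cone $T^{-1}(\mathrm{cone}(h^\bullet))$; after reducing to a minimal complex in the sense of Remark \ref{remmin}, this operation adjoins exactly one nonzero term, so the length increases by one at each step and a complex at distance $r$ from a rim complex of length $t$ has length $t+r$. Part (4) then follows from \cite[Cor. 6.10]{webb}: a complex with exactly two nonzero terms has minimal possible length among the complexes of its component, so its Auslander-Reiten triangle has indecomposable middle term and the complex lies on the rim, the only exception being the degenerate case in which it coincides (up to shift) with the $n=1$ instance of (\ref{stringcomplexsimple}). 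The main obstacle I anticipate is part (2): matching the abstractly defined complexes $C_n^\bullet$ with the explicit form (\ref{stringcomplexsimple}) requires a careful cone-by-cone computation and precise bookkeeping of how top, socle, and heart of $P_S$ interact under the symmetric structure, and it is here that one must be most attentive in transcribing Webb's conclusions to the present conventions.
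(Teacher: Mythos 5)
Your proposal takes essentially the same route as the paper: the paper offers no independent argument for this theorem, but simply records that all four parts follow from Webb's Thm.~6.5, Thm.~6.6, Cor.~6.7 and Cor.~6.10 in \cite{webb}, which is exactly the reduction you make (with the same assignment of Webb's results to the individual parts). Your supplementary sketches---the long exact sequences obtained by applying cohomology to the Auslander-Reiten triangles $T^{-1}(Z^\bullet)\to T^{-1}(\mathrm{cone}(h^\bullet))\to Z^\bullet\to Z^\bullet$, and the identification of the complexes (\ref{stringcomplexsimple}) via iterated cones of $f_{P_S}$---are consistent with Webb's arguments and with Remark \ref{rem2.2} and Lemma \ref{lem4.1}, so nothing further is needed.
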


\begin{figure}
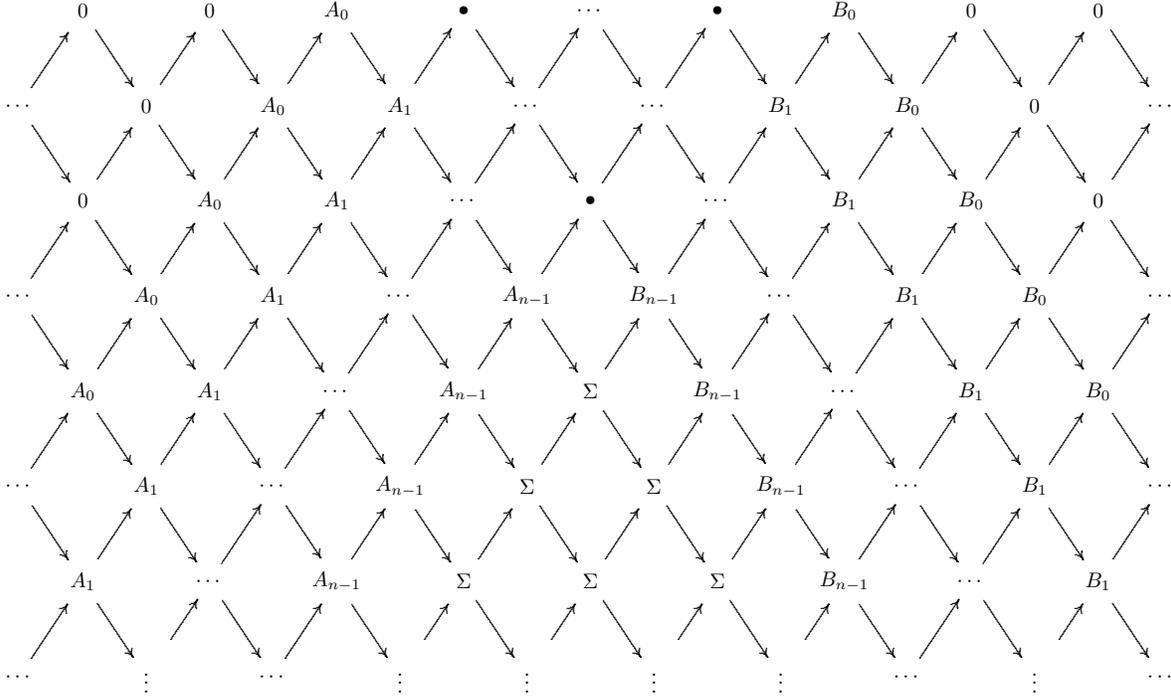

\scalebox{0.8}{
$
\begindc{\commdiag}[150]
\obj(-16,7)[11]{$0$}
\obj(-12,7)[12]{$0$}
\obj(-8,7)[13]{$A_0$}
\obj(-4,7)[14]{$\bullet$}
\obj(0,7)[15]{$\cdots$}
\obj(4,7)[16]{$\bullet$}
\obj(8,7)[17]{$B_0$}
\obj(12,7)[18]{$0$}
\obj(16,7)[19]{$0$}

\obj(-18,4)[20]{$\cdots$}
\obj(-14,4)[21]{$0$}
\obj(-10,4)[22]{$A_0$}
\obj(-6,4)[23]{$A_1$}
\obj(-2,4)[24]{$\cdots$}
\obj(2,4)[25]{$\cdots$}
\obj(6,4)[26]{$B_1$}
\obj(10,4)[27]{$B_0$}
\obj(14,4)[28]{$0$}
\obj(18,4)[29]{$\cdots$}

\obj(-16,1)[31]{$0$}
\obj(-12,1)[32]{$A_0$}
\obj(-8,1)[33]{$A_1$}
\obj(-4,1)[34]{$\cdots$}
\obj(0,1)[35]{$\bullet$}
\obj(4,1)[36]{$\cdots$}
\obj(8,1)[37]{$B_1$}
\obj(12,1)[38]{$B_0$}
\obj(16,1)[39]{$0$}

\obj(-18,-2)[40]{$\cdots$}
\obj(-14,-2)[41]{$A_0$}
\obj(-10,-2)[42]{$A_1$}
\obj(-6,-2)[43]{$\cdots$}
\obj(-2,-2)[44]{$A_{n-1}$}
\obj(2,-2)[45]{$B_{n-1}$}
\obj(6,-2)[46]{$\cdots$}
\obj(10,-2)[47]{$B_1$}
\obj(14,-2)[48]{$B_0$}
\obj(18,-2)[49]{$\cdots$}

\obj(-16,-5)[51]{$A_0$}
\obj(-12,-5)[52]{$A_1$}
\obj(-8,-5)[53]{$\cdots$}
\obj(-4,-5)[54]{$A_{n-1}$}
\obj(0,-5)[55]{$\Sigma$}
\obj(4,-5)[56]{$B_{n-1}$}
\obj(8,-5)[57]{$\cdots$}
\obj(12,-5)[58]{$B_1$}
\obj(16,-5)[59]{$B_0$}

\obj(-18,-8)[60]{$\cdots$}
\obj(-14,-8)[61]{$A_1$}
\obj(-10,-8)[62]{$\cdots$}
\obj(-6,-8)[63]{$A_{n-1}$}
\obj(-2,-8)[64]{$\Sigma$}
\obj(2,-8)[65]{$\Sigma$}
\obj(6,-8)[66]{$B_{n-1}$}
\obj(10,-8)[67]{$\cdots$}
\obj(14,-8)[68]{$B_1$}
\obj(18,-8)[69]{$\cdots$}

\obj(-16,-11)[71]{$A_1$}
\obj(-12,-11)[72]{$\cdots$}
\obj(-8,-11)[73]{$A_{n-1}$}
\obj(-4,-11)[74]{$\Sigma$}
\obj(0,-11)[75]{$\Sigma$}
\obj(4,-11)[76]{$\Sigma$}
\obj(8,-11)[77]{$B_{n-1}$}
\obj(12,-11)[78]{$\cdots$}
\obj(16,-11)[79]{$B_1$}

\obj(-18,-14)[80]{$\cdots$}
\obj(-14,-14)[81]{$\vdots$}
\obj(-10,-14)[82]{$\cdots$}
\obj(-6,-14)[83]{$\vdots$}
\obj(-2,-14)[84]{$\vdots$}
\obj(2,-14)[85]{$\vdots$}
\obj(6,-14)[86]{$\vdots$}
\obj(10,-14)[87]{$\cdots$}
\obj(14,-14)[88]{$\vdots$}
\obj(18,-14)[89]{$\cdots$}

\mor{11}{21}{}
\mor{12}{22}{}
\mor{13}{23}{}
\mor{14}{24}{}
\mor{15}{25}{}
\mor{16}{26}{}
\mor{17}{27}{}
\mor{18}{28}{}
\mor{19}{29}{}

\mor{20}{11}{}
\mor{21}{12}{}
\mor{22}{13}{}
\mor{23}{14}{}
\mor{24}{15}{}
\mor{25}{16}{}
\mor{26}{17}{}
\mor{27}{18}{}
\mor{28}{19}{}

\mor{20}{31}{}
\mor{21}{32}{}
\mor{22}{33}{}
\mor{23}{34}{}
\mor{24}{35}{}
\mor{25}{36}{}
\mor{26}{37}{}
\mor{27}{38}{}
\mor{28}{39}{}

\mor{31}{21}{}
\mor{32}{22}{}
\mor{33}{23}{}
\mor{34}{24}{}
\mor{35}{25}{}
\mor{36}{26}{}
\mor{37}{27}{}
\mor{38}{28}{}
\mor{39}{29}{}

\mor{31}{41}{}
\mor{32}{42}{}
\mor{33}{43}{}
\mor{34}{44}{}
\mor{35}{45}{}
\mor{36}{46}{}
\mor{37}{47}{}
\mor{38}{48}{}
\mor{39}{49}{}

\mor{40}{31}{}
\mor{41}{32}{}
\mor{42}{33}{}
\mor{43}{34}{}
\mor{44}{35}{}
\mor{45}{36}{}
\mor{46}{37}{}
\mor{47}{38}{}
\mor{48}{39}{}

\mor{40}{51}{}
\mor{41}{52}{}
\mor{42}{53}{}
\mor{43}{54}{}
\mor{44}{55}{}
\mor{45}{56}{}
\mor{46}{57}{}
\mor{47}{58}{}
\mor{48}{59}{}

\mor{51}{41}{}
\mor{52}{42}{}
\mor{53}{43}{}
\mor{54}{44}{}
\mor{55}{45}{}
\mor{56}{46}{}
\mor{57}{47}{}
\mor{58}{48}{}
\mor{59}{49}{}

\mor{51}{61}{}
\mor{52}{62}{}
\mor{53}{63}{}
\mor{54}{64}{}
\mor{55}{65}{}
\mor{56}{66}{}
\mor{57}{67}{}
\mor{58}{68}{}
\mor{59}{69}{}

\mor{60}{51}{}
\mor{61}{52}{}
\mor{62}{53}{}
\mor{63}{54}{}
\mor{64}{55}{}
\mor{65}{56}{}
\mor{66}{57}{}
\mor{67}{58}{}
\mor{68}{59}{}

\mor{60}{71}{}
\mor{61}{72}{}
\mor{62}{73}{}
\mor{63}{74}{}
\mor{64}{75}{}
\mor{65}{76}{}
\mor{66}{77}{}
\mor{67}{78}{}
\mor{68}{79}{}

\mor{71}{61}{}
\mor{72}{62}{}
\mor{73}{63}{}
\mor{74}{64}{}
\mor{75}{65}{}
\mor{76}{66}{}
\mor{77}{67}{}
\mor{78}{68}{}
\mor{79}{69}{}

\mor{71}{81}{}
\mor{72}{82}{}
\mor{73}{83}{}
\mor{74}{84}{}
\mor{75}{85}{}
\mor{76}{86}{}
\mor{77}{87}{}
\mor{78}{88}{}
\mor{79}{89}{}

\mor{80}{71}{}
\mor{81}{72}{}
\mor{82}{73}{}
\mor{83}{74}{}
\mor{84}{75}{}
\mor{85}{76}{}
\mor{86}{77}{}
\mor{87}{78}{}
\mor{88}{79}{}

\enddc
$
}
\caption{Cohomology diagram for the component $\mathfrak{C}$ as in Theorem \ref{webb} (i)}\label{fig7}
\end{figure}

\begin{figure}
\scalebox{0.75}{
$
\begindc{\commdiag}[120]
\obj(-20,7)[2]{$\cdots$}
\obj(-16,7)[3]{$0$}
\obj(-8,7)[4]{$0$}
\obj(0,7)[5]{$P_S$}
\obj(8,7)[6]{$0$}
\obj(16,7)[7a]{$0$}
\obj(20,7)[7b]{$\cdots$}

\obj(-20,4)[7c]{$\cdots$}
\obj(-12,4)[8]{$0$}
\obj(-4,4)[9]{$\mathrm{rad}\, P_S$}
\obj(4,4)[10]{$P_S/\mathrm{soc}\, P_S$}
\obj(12,4)[11]{$0$}
\obj(20,4)[11a]{$\cdots$}
\mor{11}{7a}{}
\mor{7a}{11a}{}
\mor{3}{8}{}
\mor{7c}{3}{}
\obj(-20,1)[12a]{$\cdots$}
\obj(-16,1)[12]{$0$}
\obj(-8,1)[13]{$\mathrm{rad}\, P_S$}
\obj(0,1)[14]{$\mathrm{H}(P_S)$}
\obj(8,1)[15]{$P_S/\mathrm{soc}\, P_S$}
\obj(16,1)[16]{$0$}
\obj(20,1)[16a]{$\cdots$}

\mor{7c}{12}{}
\mor{12}{8}{}
\mor{11}{16}{}
\mor{16}{11a}{}
\obj(-20,-2)[17a]{$\cdots$}
\obj(-12,-2)[17]{$\mathrm{rad}\, P_S$}
\obj(-4,-2)[18]{$\mathrm{H}(P_S)$}
\obj(4,-2)[19]{$\mathrm{H}(P_S)$}
\obj(12,-2)[20]{$P_S/\mathrm{soc}\, P_S$}
\obj(20,-2)[20a]{$\cdots$}
\mor{17a}{12}{}
\mor{12}{17}{}
\mor{20}{16}{}
\mor{16}{20a}{}


\obj(-20,-5)[21a]{$\cdots$}
\obj(-16,-5)[21]{$\mathrm{rad}\, P_S$}
\obj(-8,-5)[22]{$\mathrm{H}(P_S)$}
\obj(0,-5)[23]{$\mathrm{H}(P_S)$}
\obj(8,-5)[24]{$\mathrm{H}(P_S)$}
\obj(16,-5)[25]{$P_S/\mathrm{soc}\, P_S$}
\obj(20,-5)[25a]{$\cdots$}
\mor{17a}{21}{}
\mor{21}{17}{}
\mor{20}{25}{}
\mor{25}{20a}{}
\obj(-20,-8)[26a]{$\vdots$}
\obj(-12,-8)[26]{$\vdots$}
\obj(-4,-8)[27]{$\vdots$}
\obj(4,-8)[28]{$\vdots$}
\obj(12,-8)[29]{$\vdots$}
\obj(20,-8)[29a]{$\vdots$}
\mor{26a}{21}{}
\mor{21}{26}{}
\mor{29}{25}{}
\mor{25}{29a}{}



\mor{8}{4}{}
\mor{4}{9}{}
\mor{9}{5}{}
\mor{5}{10}{}
\mor{10}{6}{}
\mor{6}{11}{}
\mor{8}{13}{}
\mor{13}{9}{}
\mor{9}{14}{}
\mor{14}{10}{}
\mor{10}{15}{}
\mor{15}{11}{}
\mor{17}{13}{}
\mor{13}{18}{}
\mor{18}{14}{}
\mor{14}{19}{}
\mor{19}{15}{}
\mor{15}{20}{}
\mor{17}{22}{}
\mor{22}{18}{}
\mor{18}{23}{}
\mor{23}{19}{}
\mor{19}{24}{}
\mor{24}{20}{}
\mor{26}{22}{}
\mor{22}{27}{}
\mor{27}{23}{}
\mor{23}{28}{}
\mor{28}{24}{}
\mor{24}{29}{}
\enddc
$
}
\caption{Cohomology diagram for the component $\mathfrak{C}$ as in Theorem \ref{webb} (ii).}\label{fig8}
\end{figure}

\section{String complexes}\label{sec3}

Let $\A=\k Q/I$ be a symmetric special biserial algebra satisfying condition (C), and let $\widetilde{I}$ the admissible ideal of $\k Q$ defined as follows:
\begin{equation}
\widetilde{I}= \langle \alpha\beta, p_1, p_2\,|\, \alpha\beta, p_1-p_2\in \rho\rangle.
\end{equation}
It follows that  $\widetilde{\A}= \k Q/ \widetilde{I}$ is the associated string algebra of $\A$ as in Remark \ref{stringalg}.

In the following, we define generalized string for  $\A$. This method is inspired by that in \cite[\S 5.1]{bekmerk2}, where Bekkert et al. define generalized strings and bands for a skew-gentle algebra by using a gentle algebra associated to it.   
 
If $w$ is a path of positive length in $\A$, we define a formal inverse $w^{-1}$ of $w$ and we let $\mathbf{s}(w^{-1})=
\mathbf{t}(w)$ and $\mathbf{t}(w^{-1})=\mathbf{s}(w^{-1})$.  By a 
{\it generalized word} for $\A$ of positive length $n>0$, we mean a sequence $w_1\cdot w_2\cdots w_n$ where each $w_j$ is either a path of positive length, or the formal inverse of a path of positive length, and  such that $\mathbf{s}(w_{j+1})=\mathbf{t}(w_j)$ for $1\leq j \leq n-1$, $\mathbf{s}(w)=\mathbf{s}(w_1)$ and $\mathbf{t}(w)=\mathbf{t}(w_n)$. 
If $w=w_1\cdot w_2\cdots w_n$ is a generalized word of length $n>0$, we let $w^{-1}=w_n^{-1}\cdots w_2^{-1}\cdot w_1^{-1}$.  The concatenation of two generalized words $w=w_1\cdots w_n$ and $v=v_1\cdots v_m$ is defined as the generalized word $w\cdot v= w_1\cdots w_n\cdot 
v_1\cdots v_m$, provided that $\mathbf{s}(v)=\mathbf{t}(w)$. For all $v\in Q_0$, we consider $\1_v$ as a generalized word of length zero, and let $\1_v=\1_v^{-1}$.  

\begin{remark}
For all generalized words $w$, we assume that  $\1_{\mathbf{s}(w)}\cdot w \not= w$  and $w\cdot \1_{\mathbf{t}(w)} \not= w$. 
\end{remark}

If $w$ is a closed generalized word of non-negative length, then for all integers $n\geq 1$, we denote by $w^{\cdot n}$ the $n$-fold generalized concatenation $w\cdot w\cdots w$ of $w$ with itself. For all $v\in Q_0$, we also consider the $n$-fold generalized concatenation of $\1_v$, namely $w=\1_v^{\cdot n}$.
If $v$ and $w$ are two generalized words, we say that $w\sim_S v$ if and only if $w = v^{-1}$.

Let $J(\A)$ be the difference of ideals of $\k Q$:  
\begin{equation}\label{J(A)}
J(\A)= \widetilde{I}- \langle p_1, p_1\in \mathbf{Pa}_{>1}(\A)\,|\, p_1-p_2\in \rho\rangle. 
\end{equation}

We denote by $St(\A)$ the set of all strings for $\A$ in the sense of \cite{buri}. We denote by $\overline{GSt(\A)}$ the set of all generalized words for $\A$ of positive length $w=w_1\cdot w_2\cdots w_n$ that satisfies the following conditions. For all $1\leq j \leq n-1$, 
\begin{enumerate}
\item if $w_j, w_{j+1}\in \mathbf{Pa}_{>0}(\A)$, then $w_jw_{j+1}\in J(\A)$;
\item if $w_j^{-1}, w_{j+1}^{-1}\in \mathbf{Pa}_{>0}(\A)$, then $w_{j+1}^{-1}w_j^{-1}\in J(\A)$;
\item if either $w_j, w_{j+1}^{-1} \in \mathbf{Pa}_{>0}(\A)$ or $w_j^{-1}, w_{j+1}\in \mathbf{Pa}_{>0}(\A)$, then $w_jw_{j+1}\in St(\A)$. 
\end{enumerate}

We denote by $GSt(\A)$ a fixed set of representatives of the quotient of $\overline{GSt(\A)}$ over the equivalence relation $\sim_S$ plus all generalized words of length zero, and 
the elements of $GSt(\A)$ will be called {\it generalized strings} for $\A$.

We define inductively a function $\eta$ over the set of generalized strings for $\A$ as follows. If $w=w_1\cdot w_2\cdots w_n$ is a generalized word of positive length for $\A$ with $n\geq 1$, then for all $1\leq j\leq n$, we let
\begin{equation*}
\eta_w(j)=
\begin{cases}
0, &\text{ if $j=0$, }\\
\eta_w(j-1)+1, &\text{ if $w_j\in \mathbf{Pa}_{>0}(\A)$,}\\
\eta_w(j-1)-1, &\text { if $w_j^{-1}\in \mathbf{Pa}_{>0}(\A)$}. 
\end{cases}
\end{equation*}

\begin{definition}\label{def4.1}
Let $w=w_1\cdots w_n$ be a generalized string for $\A$ with $n\geq 1$. We define the complex $P[w]^\bullet$ in $\mathcal{K}^b(\textup{proj-}\A)$ as follows. For all $\ell\in \mathbb{Z}$, we let 

\begin{equation*}
P[w]^\ell=\bigoplus_{j=0}^n\Delta(\eta_w(j), \ell)\mathbf{P}_{c_w(j)}, 
\end{equation*}
\noindent
where $\Delta$ is the Kronecker delta, $c_w(0)=\mathbf{s}(w)$, and for all $1\leq  j \leq n$, $c_w(j)=\mathbf{t}(w_j)$.  The differential maps are $\delta_{P[w]}^i = (\delta_{jk,w}^l)_{0\leq j,k\leq n}
$, where for each $\ell\in\mathbb{Z}$, 
\begin{equation*}
\delta_{jk,w}^\ell=
\begin{cases}
p(w_{j+1}), &\text{ if $w_{j+1}\in \mathbf{Pa}_{>0}(\A)$, $\eta_w(j)=\ell$ and $k=j+1$,}\\
p(w_j^{-1}), &\text{ if $w_j^{-1}\in \mathbf{Pa}_{>0}(\A)$, $\eta_w(j)=\ell$ and $k=j-1$,}\\
0, &\text{ otherwise,}
\end{cases}
\end{equation*}
where for all $1\leq j\leq n$, and all paths $x\in \mathbf{Pa}_{>0}(\A)$, $p(w_j)$ is as in Remark \ref{projmorph}. 
We call $P[w]^\bullet$ the {\it string complex} corresponding to the generalized string $w$. 
\end{definition}

\begin{remark}\label{remext}
Let $\A$ be a symmetric special biserial algebra that satisfies the condition (C). Note that in principle, we are defining string complexes for $\widetilde{\A}$. In particular, if $\widetilde{P}^\bullet$ is a string complex over $\widetilde{\A}$, then $\widetilde{P}^\bullet$ is minimal in the sense of Remark \ref{remmin}. Thus, we can extend $\widetilde{P}^\bullet$ to be a complex $P^\bullet$ whose terms are finitely generated projective $\A$-modules by adding the missing socle to each of the biserial indecomposable direct summands of the terms of $\widetilde{P}^\bullet$ and by extending the definition of the differentials.  
\end{remark}

\begin{definition}\label{defi3.5}
\begin{enumerate}
\item For all generalized strings $w=w_1\cdot w_2\cdots w_n$ for $\A$ of positive length, we define 
\begin{equation*}
\deg w:=\max\{\eta_w(j)|0\leq j \leq n\}.
\end{equation*}
\item For all $v\in Q_0$ and all integers $n\geq 0$, if $w=\1_v^{\cdot n}$, then we let $\deg w = n$. 
\end{enumerate} 
\end{definition}

The following proposition follows by using the fact that the Nakayama's functor of a symmetric $\k$-algebra is the identity, by using the radical series of the indecomposable projective modules over symmetric special biserial algebras, and by using  Definition \ref{def4.1}.

\begin{proposition}
Let $\A$ be a symmetric special biserial algebra that satisfies condition \textup{(C)}.
\begin{enumerate}  
\item If $w$ is a string representative for $\A$ and $M[w]$ is the corresponding string right $\A$-module (in the sense of \cite{buri}), then $\mathrm{H}^0(T^{\deg w}(P[w]^\bullet))$ is isomorphic to $\Omega^{-1}M[w]$ as right $\A$-modules.
\item For all generalized string representatives $w$ for $\A$, the string complexes $P[w]^\bullet$ and $T^{-\deg w} (P[w^{-1}]^\bullet)$ are isomorphic in $\mathcal{K}^b(\textup{proj-}\A)$. 
\end{enumerate}
\end{proposition}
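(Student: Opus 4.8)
The plan is to prove the two parts by separate but parallel computations, each reducing to the interplay between the combinatorial data $(\eta_w,c_w)$ of Definition~\ref{def4.1} and the Loewy structure of the indecomposable projectives, the latter being biserial with $\mathrm{top}\,\mathbf{P}_v\cong\mathrm{soc}\,\mathbf{P}_v\cong S_v$ because $\A$ is symmetric (Remark~\ref{rem2.2}).

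For part (1), since cohomology commutes with $T$, one has $\mathrm{H}^0(T^{\deg w}(P[w]^\bullet))\cong \mathrm{H}^{\deg w}(P[w]^\bullet)$, so I would work in the top degree $\deg w=\max_j\eta_w(j)$. By Definition~\ref{def4.1}, $P[w]^{\deg w}=\bigoplus_{\eta_w(j)=\deg w}\mathbf{P}_{c_w(j)}$, and as there is no term in degree $\deg w+1$ the outgoing differential vanishes; hence
\begin{equation*}
\mathrm{H}^{\deg w}(P[w]^\bullet)=\mathrm{coker}\bigl(\delta_{P[w]}^{\,\deg w-1}\colon P[w]^{\deg w-1}\longrightarrow P[w]^{\deg w}\bigr).
\end{equation*}
The heart of the argument is to recognize the right-hand side as a cosyzygy. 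First I would read off, from the sign changes of $\eta_w$, that the vertices $c_w(j)$ attaining $\deg w$ are sinks of the walk $w$ and account for $\mathrm{soc}\,M[w]$; since $\A$ is symmetric the injective envelope of $S_v$ is $\mathbf{P}_v$, so that $P[w]^{\deg w}\cong I(M[w])$, the injective envelope of $M[w]$. Next, using the explicit maps $p(w_j)$ of Remark~\ref{projmorph} and the radical series of each $\mathbf{P}_{c_w(j)}$, I would identify the image of $\delta_{P[w]}^{\deg w-1}$ with the canonical copy of $M[w]$ inside $I(M[w])$: each incoming arm $p(w_j)$ (respectively $p(w_j^{-1})$) lands on the uniserial submodule of $\mathbf{P}_{c_w(j)}$ traced out by the corresponding arm of the string, and together these glue to the inclusion $M[w]\hookrightarrow I(M[w])$; the cokernel is then $I(M[w])/M[w]=\Omega^{-1}M[w]$, as claimed. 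I expect this last identification to be the main obstacle: one must check that the socle-completion of Remark~\ref{remext} (passing from $\widetilde{\A}$ to $\A$) makes the two arms meeting at each maximal vertex fill out exactly $\mathrm{rad}$ of the corresponding projective, with the commutativity relations $p_1-p_2\in\rho$ of condition~(C) forcing the two arms to agree on the common socle, and that the hypotheses defining $\mathscr{C}$ ensure every socle constituent of $M[w]$ is reached already in the top degree $\deg w$.

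For part (2), I would exploit the reflection symmetry of Definition~\ref{def4.1} under $w\mapsto w^{-1}=w_n^{-1}\cdots w_1^{-1}$. Re-indexing summands by $j\mapsto n-j$ gives a bijection between the indecomposable summands of $P[w]^\bullet$ and those of $P[w^{-1}]^\bullet$, and since $c_{w^{-1}}(n-j)=c_w(j)$ the same projectives occur. The key combinatorial identity, proved by telescoping the recursion for $\eta$, is
\begin{equation*}
\eta_{w^{-1}}(n-j)=\eta_w(j)-\eta_w(n),
\end{equation*}
which shows that the two gradings agree after a single global shift, pinning down the power of $T$ recorded in the statement. Finally I would check that under $j\mapsto n-j$ the two cases of the differential in Definition~\ref{def4.1} are interchanged but define the same maps $p(\cdot)$, so that the reflection is a morphism of complexes once the sign twist introduced by $T$ is absorbed by a diagonal isomorphism with entries $\pm1$ on the summands, yielding $P[w]^\bullet\cong T^{-\deg w}(P[w^{-1}]^\bullet)$ in $\mathcal{K}^b(\textup{proj-}\A)$. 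Here the only delicate point is the sign bookkeeping: verifying that the $\pm1$ scaling can be chosen consistently so that the reflection commutes with the sign-twisted differentials, which is a standard but necessary verification.
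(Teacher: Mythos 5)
Your overall route is the same one the paper intends: the paper's own ``proof'' is a single sentence citing exactly the three ingredients you build on (triviality of the Nakayama functor, the Loewy structure of the indecomposable projectives, and Definition~\ref{def4.1}), so your part (1) is essentially that sketch written out in full. Its skeleton is sound: $\mathrm{H}^0(T^{\deg w}(P[w]^\bullet))=\mathrm{H}^{\deg w}(P[w]^\bullet)$ is the cokernel of the last differential; the summands in degree $\deg w$ are the injective envelopes of the socle constituents of $M[w]$; and the differential into top degree factors as the projective cover of $M[w]$ followed by the embedding $M[w]\hookrightarrow I(M[w])$, so the cokernel is $\Omega^{-1}M[w]$. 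Two points you should make explicit rather than assert. First, the reason every socle position of the walk reaches the \emph{global} maximum of $\eta_w$ is that when a classical string is regrouped into a generalized string its maximal direct/inverse runs alternate, so $\eta_w$ oscillates between two adjacent values and every local maximum is global; this is what legitimizes $P[w]^{\deg w}\cong I(M[w])$, and it is special to part (1) (it fails for general generalized strings). Second, the identification $\mathrm{im}\,\delta^{\deg w-1}\cong M[w]$ breaks down when $w$ traverses a full maximal path: for $w=\tau_0\gamma_1$ in $\A_1$ of Figure~\ref{fig1} one has $M[w]\cong\mathbf{P}_0$ projective, hence $\Omega^{-1}M[w]=0$, while the cokernel of $p(\tau_0\gamma_1)$ is $\mathbf{P}_0/\mathrm{soc}\,\mathbf{P}_0\neq 0$; such strings must be excluded, consistently with the paper's convention that string $\A$-modules are the non-projective indecomposables.

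In part (2) there is a genuine gap, and it sits exactly at the step you pass over. Your identity $\eta_{w^{-1}}(n-j)=\eta_w(j)-\eta_w(n)$ is correct, but it pins the global shift down as $\eta_w(n)$, equivalently as $\deg w-\deg(w^{-1})$, and \emph{not} as $\deg w=\max_j\eta_w(j)$ from Definition~\ref{defi3.5}; the two agree only when the maximum of $\eta_w$ is attained at the endpoint $j=n$. They genuinely differ: take $\A_1$ and $w=\tau_0\cdot\gamma_2^{-1}$, so that $\eta_w=(0,1,0)$ and $\deg w=1$. Then $P[w]^\bullet$ and $P[w^{-1}]^\bullet$ are the same minimal complex $\mathbf{P}_0\oplus\mathbf{P}_2\to\mathbf{P}_1$ concentrated in degrees $0$ and $1$, whereas $T^{-\deg w}(P[w^{-1}]^\bullet)$ is concentrated in degrees $1$ and $2$, and two minimal perfect complexes supported in different degrees are not isomorphic in $\mathcal{K}^b(\textup{proj-}\A_1)$. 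So the phrase ``pinning down the power of $T$ recorded in the statement'' does not follow from your computation: either you would need $\eta_w(n)=\deg w$ (false in general), or the exponent must be read as $\deg w-\deg(w^{-1})=\eta_w(n)$. As written, the final step of your part (2) fails; with the exponent corrected to $\eta_w(n)$, your re-indexing argument, including the routine adjustment of signs by a diagonal isomorphism with entries $\pm 1$, does go through. (This also flags an imprecision in the paper's own statement, which your blind reconstruction has in effect uncovered rather than resolved.)
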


\begin{remark}\label{topsocle}
If $\A$ is a symmetric special biserial algebra (not necessarily satisfying the condition (C)), then we denote by $P[\1_v^{\cdot n}]^\bullet$ the complex as in (\ref{stringcomplexsimple}). In particular, $P[\1_v^{\cdot n}]^\bullet$ is indecomposable in $\mathcal{K}^b(\textup{proj-}\A)$, and the component $\mathfrak{C}$ of the Auslander-Reiten quiver of  $\mathcal{K}^b(\textup{proj-}\A)$ containing $\mathbf{P}_v$ can be described completely by using the complexes $P[\1_v^{\cdot n}]^\bullet$ together with Figure \ref{fig5}.  Observe in particular that in this situation, $\mathbf{P}_i\not= P[\1_i]^\bullet$ as complexes.
\end{remark}

Let  $\A$ be $\A_4$ as in Figure \ref{fig1}.  It was proved in \cite[Thm. 8]{giraldo-velez}) that if $P^\bullet$ is a string complex over $\A$, then $P^\bullet$ is an indecomposable object in $\mathcal{K}^b(\textup{proj-}\A)$. This was obtained by defining a functor of $\k$-linear categories $\mathbf{F}_\A:\mathcal{K}^b(\textup{proj-}\A)\to\mathscr{S}(\mathscr{Y}(\A), \k)$, where $\mathscr{S}(\mathscr{Y}(\A), \k)$ is the $\k$-category of Bondarenko's representations of a linearly ordered set $\mathscr{Y}(\A)$ determined by $\A$ (see e.g. \cite[\S 2 \& \S4.2]{bekmerk}), such that $\mathbf{F}_\A$ identifies $P^\bullet$ with an indecomposable representation in $\mathscr{S}(\mathscr{Y}(\A), \k)$ (see \cite[Thm. 3]{bekmerk}). Similar arguments were used by A. Franco et al. in \cite{franco-giraldo-rizzo} in order to describe combinatorially classes of indecomposable objects in the derived category over string algebras satisfying certain non-trivial conditions. In particular, the following result follows from  \cite[Thm. 27]{franco-giraldo-rizzo}.

\begin{theorem}\label{thm3.7}
Let $\widetilde{\A} = \k Q/\widetilde{I}$ be a string algebra with the property that every arrow belongs to a unique maximal path. Then every string complex over $\widetilde{\A}$ is indecomposable in $\mathcal{K}^b(\textup{proj-}\widetilde{\A})$.
\end{theorem}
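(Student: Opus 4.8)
The plan is to reduce the statement to an indecomposability question about Bondarenko's poset representations through the $\k$-linear functor $\mathbf{F}_{\widetilde{\A}}\colon \mathcal{K}^b(\textup{proj-}\widetilde{\A})\to \mathscr{S}(\mathscr{Y}(\widetilde{\A}),\k)$ employed in \cite[Thm. 8]{giraldo-velez} and \cite[Thm. 27]{franco-giraldo-rizzo}. The mechanism transferring indecomposability is purely formal: since $\mathbf{F}_{\widetilde{\A}}$ is $\k$-linear, hence additive, any nontrivial decomposition $P[w]^\bullet\cong X^\bullet\oplus Y^\bullet$ would produce $\mathbf{F}_{\widetilde{\A}}(P[w]^\bullet)\cong \mathbf{F}_{\widetilde{\A}}(X^\bullet)\oplus\mathbf{F}_{\widetilde{\A}}(Y^\bullet)$; if in addition $\mathbf{F}_{\widetilde{\A}}$ is faithful, then $\mathbf{F}_{\widetilde{\A}}(\mathrm{id}_{X^\bullet})\neq 0$ forces $\mathbf{F}_{\widetilde{\A}}(X^\bullet)\neq 0$, and likewise for $Y^\bullet$, so the decomposition would remain nontrivial. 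It therefore suffices to prove that $\mathbf{F}_{\widetilde{\A}}(P[w]^\bullet)$ is indecomposable.

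First I would make this image explicit. By Remark \ref{projmorph}, each differential component $p(w_{j+1})$ (resp. $p(w_j^{-1})$) of $P[w]^\bullet$ is the path morphism attached to a path of positive length, and each projective term $\mathbf{P}_{c_w(j)}$ has a radical filtration governed by the maximal paths of $Q$ through the vertex $c_w(j)$. The hypothesis that every arrow lies in a unique maximal path is exactly what guarantees that these radical filtrations assemble into a single total order on $\mathscr{Y}(\widetilde{\A})$, so that $\mathbf{F}_{\widetilde{\A}}(P[w]^\bullet)$ is a well-defined representation whose entries are arranged in a single chain mirroring the alternating walk $w_1\cdot w_2\cdots w_n$ recorded by the height function $\eta_w$ of Definition \ref{def4.1}.

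Next I would establish indecomposability of this representation. Because $w$ is a single generalized string, the corresponding object of $\mathscr{S}(\mathscr{Y}(\widetilde{\A}),\k)$ is connected: consecutive letters $w_j,w_{j+1}$ link adjacent blocks, and there is no partition of the indices $\{0,1,\dots,n\}$ into two subsets closed under these links. By the classification of indecomposable representations of a linearly ordered set (Bondarenko), a connected chain-representation of this shape admits no nontrivial idempotent endomorphism and is therefore indecomposable; combined with the first paragraph, this yields the theorem.

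The main obstacle I expect is not the formal argument but the compatibility check underlying it: one must verify that the string complexes of Definition \ref{def4.1}, built from the generalized strings of the present paper, genuinely lie in the class of complexes to which \cite[Thm. 27]{franco-giraldo-rizzo} applies, and that $\mathbf{F}_{\widetilde{\A}}$ is additive and faithful in this generality. Concretely, this means matching the combinatorial data $(\eta_w, c_w, p(w_j))$ of Definition \ref{def4.1} with the representation-theoretic data attached to $\mathscr{Y}(\widetilde{\A})$, and it is precisely here that the unique-maximal-path hypothesis is used essentially: without it the radical data of the $\mathbf{P}_v$ need not linearize into a single chain and the image representation could split. Once this dictionary is in place, the conclusion is immediate from \cite[Thm. 27]{franco-giraldo-rizzo}.
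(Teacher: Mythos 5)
Your proposal takes essentially the same route as the paper: the paper obtains Theorem \ref{thm3.7} directly as a consequence of \cite[Thm. 27]{franco-giraldo-rizzo}, whose mechanism is precisely the reduction you describe through the functor $\mathbf{F}_{\widetilde{\A}}\colon\mathcal{K}^b(\textup{proj-}\widetilde{\A})\to\mathscr{S}(\mathscr{Y}(\widetilde{\A}),\k)$ to Bondarenko's representations of a linearly ordered set, following \cite[Thm. 8]{giraldo-velez} and \cite[Thm. 3]{bekmerk}. The compatibility checks you flag (additivity and non-vanishing of the functor on nonzero objects, and the use of the unique-maximal-path hypothesis to linearize the radical data) are exactly what the cited result supplies, so your argument is correct and matches the paper's.
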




\section{Auslander-Reiten components containing a string complex}\label{sec4}

Assume that $\A$ is a symmetric special biserial algebra that belongs to the class $\mathscr{C}$ as in Definition \ref{classC}. 

For all complexes $P^\bullet$ in $\mathcal{K}^b(\textup{proj-}\A)$, we denote by $^-P^\bullet$ the complex obtained from $P^\bullet$ by changing the signs of the differentials.

\begin{definition}\label{defPk}
Let  $w$ be a generalized string representative for $\A$, and let $P[w]^\bullet$ be the corresponding string complex. For all $k\geq 0$, we define a perfect complex $P_k[w]^\bullet$ as follows. 
We let $P_{-1}[w]^\bullet=0$, $P_0[w]^\bullet =P[w]^\bullet$ and $P_1[w]^\bullet=\mathrm{cone}(f_{w,0}^\bullet)$, where $f_{w,0}^\bullet:{^-P_0[w]^\bullet}\to P_0[w]^\bullet$ is the morphism in $\mathcal{K}^b(\textup{proj-}\A)$ with  $f^{\deg w}_{w,0}=f_{P_0[w]^{\deg w}}$ and $f^\ell_{w,0}=0$ for all $\ell\not= \deg w$, where $f_{P_0[w]^{\deg w}}$ and $\deg w$ are as in Remark \ref{rem2.2} and Definition \ref{defi3.5}, respectively.  
Let $k\geq 2$ be fixed and assume that $P_{k-1}[w]^\bullet$ is previously defined. Consider  $f^\bullet_{w,k-1}$ the morphism $f^\bullet_{w,k-1}: {^-}P_{k-1}[w]^\bullet\to P_{k-1}[w]^\bullet$, where $f^{\deg w}_{w,k-1}=f_{P_{k-1}[w]^{\deg w}}$, and $f^\ell_{w,k-1}=0$ for all $\ell\not=\deg w$. Then $P_k[w]^\bullet$ is the indecomposable complex in $\mathcal{K}^b(\textup{proj-}\A)$ such that 
\begin{equation}\label{P_k}
\mathrm{cone}(f^\bullet_{w,k-1})=P_k[w]^\bullet\oplus T(P_{k-2}[w]^\bullet).
\end{equation}
\end{definition}

In the following, we state and prove the main result of this article. Its proof uses similar ideas to those in the proof of \cite[Thm. 14]{giraldo-velez}.

\begin{theorem}\label{prop4.2}
Let $\A=\k Q/I$ be a symmetric special biserial algebra in the class $\mathscr{C}$ as in Definition \ref{classC}, and let $w$ be a generalized string representative for $\A$ of positive length. Then we have the following.

\begin{enumerate}
\item The string complex $P[w]^\bullet$ is indecomposable in $\mathcal{K}^b(\textup{proj-}\A)$.
\item If $\mathfrak{C}$ is the component of  $\Gamma(\mathcal{K}^b(\textup{proj-}\A))$ containing $P[w]^\bullet$ as in Figure \ref{fig5}, then for all $k\geq 0$,  $C_k^\bullet=P_k[w]^\bullet$, where $P_k[w]^\bullet$ is as in Definition \ref{defPk}. 
\end{enumerate}
\end{theorem}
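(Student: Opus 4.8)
The plan is to prove the two statements essentially in sequence, leveraging the results assembled in the preliminaries. For part (i), I would first invoke Theorem \ref{thm3.7}: since $\A$ belongs to the class $\mathscr{C}$, the associated string algebra $\widetilde{\A}=\k Q/\widetilde{I}$ has the property that every arrow belongs to a unique maximal path, so every string complex $\widetilde{P}[w]^\bullet$ over $\widetilde{\A}$ is indecomposable in $\mathcal{K}^b(\textup{proj-}\widetilde{\A})$. The remaining task is to transfer indecomposability from $\widetilde{\A}$ to $\A$ along the construction of Remark \ref{remext}, where $P[w]^\bullet$ is obtained from $\widetilde{P}[w]^\bullet$ by restoring the missing socles of the biserial projective summands and extending the differentials. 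I would argue that a nontrivial idempotent decomposition of $P[w]^\bullet$ in $\mathcal{K}^b(\textup{proj-}\A)$, after reducing to a minimal complex, would restrict to a nontrivial decomposition of $\widetilde{P}[w]^\bullet$ over $\widetilde{\A}$, contradicting Theorem \ref{thm3.7}; equivalently, I would show that $\mathrm{End}_{\mathcal{K}^b(\textup{proj-}\A)}(P[w]^\bullet)$ is local by relating its idempotents to those of the endomorphism ring over $\widetilde{\A}$.

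For part (ii), the strategy is induction on $k$, using Webb's description of the component $\mathfrak{C}$ in Theorem \ref{webb} together with the explicit recursive construction of the complexes $P_k[w]^\bullet$ in Definition \ref{defPk}. The base cases $C_0^\bullet = P_0[w]^\bullet = P[w]^\bullet$ hold by definition. The key input is that $P[w]^\bullet$ lies on the rim of $\mathfrak{C}$: by Theorem \ref{webb}(iv) this follows once one checks that $P[w]^\bullet$ has exactly two nonzero terms and is not of the form (\ref{stringcomplexsimple}) for $n=1$, or more generally from the structure of the cohomology diagram in Figure \ref{fig7}. Granting that $P[w]^\bullet$ is on the rim, the Auslander-Reiten triangle ending in $C_k^\bullet$ has the form $T^{-1}(C_k^\bullet)\to Y^\bullet\to C_k^\bullet\to C_k^\bullet$, and by Lemma \ref{lem4.1} the connecting morphism is homotopic to the map $f^\bullet$ concentrated in the top degree $\deg w$ and equal to $f_{P^{\deg w}}$ there. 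Comparing this with $f^\bullet_{w,k-1}$ in Definition \ref{defPk}, the middle term $Y^\bullet$ is precisely $T^{-1}(\mathrm{cone}(f^\bullet_{w,k-1}))$; by the mesh structure of $\Z\mathbb{A}_\infty$ (Figure \ref{fig5}), $Y^\bullet \cong T^{-1}(C_{k+1}^\bullet)\oplus C_{k-1}^\bullet$, which after applying $T$ matches the defining relation (\ref{P_k}). The inductive step then identifies $C_{k+1}^\bullet$ with $P_{k+1}[w]^\bullet$.

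The main obstacle I anticipate is verifying that $P[w]^\bullet$ lies on the rim, and more precisely matching the cone computation in (\ref{P_k}) with the $\Z\mathbb{A}_\infty$ mesh at the level of \emph{minimal} complexes. The morphism $f^\bullet_{w,k-1}$ is only defined up to homotopy, and one must ensure that the mapping cone, once split off its contractible summands and reduced to its minimal form, decomposes exactly as $P_{k+1}[w]^\bullet\oplus T(P_{k-1}[w]^\bullet)$ rather than as some a priori different indecomposable plus shift. This requires tracking the socle-extension bookkeeping of Remark \ref{remext} through the cone construction, since the extra socle terms added in passing from $\widetilde{\A}$ to $\A$ interact with $f_{P^{\deg w}}$; establishing that these bookkeeping terms behave compatibly under repeated cones—so that the induction closes—is the delicate point. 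Once the length count from Theorem \ref{webb}(iii) is invoked to confirm that $C_k^\bullet$ has the expected length $t+k$, the indecomposability of each $P_k[w]^\bullet$ follows from its position in $\mathfrak{C}$, completing the argument.
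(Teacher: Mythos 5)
Your treatment of part (i) matches the paper: indecomposability is obtained from Theorem \ref{thm3.7} applied to $\widetilde{\A}$ together with the socle-restoration of Remark \ref{remext}, and your sketch of how to transfer idempotents is a reasonable fleshing-out of what the paper leaves implicit. Your inductive step for part (ii) is also essentially the paper's argument (modulo an index slip: the triangle ending in $C_k^\bullet$ has connecting morphism $f^\bullet_{w,k}$, not $f^\bullet_{w,k-1}$): one invokes Lemma \ref{lem4.1} to replace the connecting morphism of the Auslander--Reiten triangle by the standard top-to-socle map, and then the identity $\mathrm{cone}(f^\bullet_{w,k})=P_{k+1}[w]^\bullet\oplus T(P_{k-1}[w]^\bullet)$ is \emph{definitional} (relation (\ref{P_k})), so no ``socle bookkeeping through repeated cones'' is actually needed; the mesh structure of $\Z\mathbb{A}_\infty$ does the identification for you. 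That worry in your last paragraph dissolves once you read Definition \ref{defPk} as defining $P_k[w]^\bullet$ to be the indecomposable complement of $T(P_{k-2}[w]^\bullet)$ in the cone, which is well-posed precisely because of the Auslander--Reiten triangle.

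The genuine gap is the rim claim, which you correctly flag as ``the main obstacle'' but never actually prove. Your proposed justification via Theorem \ref{webb}(iv) only applies to complexes with \emph{exactly two} non-zero terms, i.e.\ to generalized strings of length $1$; a general string complex has more (in Example \ref{exam3}, $P[w]^\bullet$ has three non-zero terms), and ``more generally from the structure of the cohomology diagram in Figure \ref{fig7}'' is not an argument, since that figure describes the diagram of a component \emph{given} a rim complex and does not by itself locate an arbitrary complex within its component. The paper closes this gap by induction on the length of $w$: assuming $T^m(P[w]^\bullet)=C_{k_0}^\bullet$ with $k_0\geq 1$, the irreducible morphism $C_{k_0-1}^\bullet\to T^m(P[w]^\bullet)$ can be realized in $\mathcal{C}^b(\textup{proj-}\A)$ and, by the length count of Theorem \ref{webb}(iii) together with \cite[Prop.~3]{giraldo-merklen}, is a degreewise monomorphism; hence $T^{-m}(C_{k_0-1}^\bullet)$ is a proper subcomplex of $P[w]^\bullet$, so it is either a shifted projective stalk $T^{-m'}(\mathbf{P}_v)$ --- ruled out by Theorem \ref{webb}(ii), which would force $P[w]^\bullet\cong T^{-m''}(P[\1_v^{\cdot 2}]^\bullet)$, contradicting that $w$ has positive length --- or a string complex $P[w']^\bullet$ of a proper substring, which by the inductive hypothesis lies on the rim of $\mathfrak{C}$, forcing its extreme cohomology groups to agree with those of $P[w]^\bullet$, again impossible. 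Without this (or some equivalent) argument, your induction on $k$ in part (ii) has no anchor: you cannot even normalize $C_0^\bullet=P[w]^\bullet$, since a priori $P[w]^\bullet$ could sit in the interior of $\mathfrak{C}$.
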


\begin{proof}
Since $\A$ belongs to the class $\mathscr{C}$ as in Definition \ref{classC}, the statement (i) follows from Theorem \ref{thm3.7} and Remark \ref{remext}. 

(ii). We first prove by induction on the length of $w$ that $P[w]^\bullet$ lies in the rim of $\mathfrak{C}$. If $w$ has length $1$, then $P[w]$ has two non-zero terms. It follows by Theorem \ref{webb} (iv) that $P[w]^\bullet$ lies on the rim of $\mathfrak{C}$. Let $n\geq 2$ be arbitrary and assume by induction that for all generalized string representatives $w'$ that have length less than $n$, the perfect complex $P[w']^\bullet$ lies on the rim of $\mathfrak{C}$.  Assume next that $w$ has length $n$ and suppose by contradiction that $P[w]^\bullet$ does not lie on the rim of $\mathfrak{C}$. Then for some $k_0\geq 1$ and some $m\in \Z$, we obtain that $T^m(P[w]^\bullet)=C_{k_0}^\bullet$. Because of the general structure of the component $\mathfrak{C}$ as shown in Figure \ref{fig5}, we get that there exists an irreducible morphism $u_{k_0-1}^\bullet:C_{k_0-1}^\bullet \to T^m(P[w]^\bullet)$. By \cite[Thm. 6]{giraldo-merklen} (see also \cite[Lemma 3.1(1)]{scherotzke}), we can assume that $u_{k_0-1}^\bullet$ is an irreducible morphism in the additive category of bounded complexes $\mathcal{C}^b(\textup{proj-}\A)$. It follows from Theorem \ref{webb} (iii) and \cite[Prop. 3]{giraldo-merklen} that for all $j\in \Z$, $u_{k_0-1}^j$ is a monomorphism and thus $T^{-m}(C_{k_0-1}^\bullet)$ is isomorphic in $\mathcal{C}^b(\textup{proj-}\A)$ to one of the proper subcomplexes of $P[w]^\bullet$, which are themselves either of the form $T^{-m'}(\mathbf{P}_v)$ for some $v\in Q_0$ and $m'\in \Z$, or a string complex $P[w']^\bullet$, where $w'$ is a proper generalized substring of $w$ for $\A$ of positive length. Assume first that $T^{-m}(C_{k_0-1}^\bullet)=T^{-m'}(\mathbf{P}_v)$ for some $v\in Q_0$ and $m'\in \Z$. By Theorem \ref{webb} (ii), $k_0=1$ and $P[w]^\bullet = T^{-m''}(P[\1_v^{\cdot 2}]^\bullet)$ for some $m''\in \Z$, where $P[\1_v^{\cdot 2}]^\bullet$ is as in Remark \ref{topsocle}. This argument contradicts that $w$ is a generalized string for $\A$ of positive length.  Next assume that $T^{-m}(C_{k_0-1}^\bullet)=P[w']^\bullet$, where $w'$ is a proper generalized substring of $w$ of positive length. By induction we have that $P[w']^\bullet$ lies in the rim of $\mathfrak{C}$. 
Thus by Theorem \ref{webb},  the non-zero cohomology groups of higher (resp. lowest) degree of $P[w]^\bullet$ and $P[w']^\bullet$ are isomorphic. This is impossible for $w'$ is assumed to be a proper generalized substring of $w$ of positive length.  Hence $P[w]^\bullet$ lies in the boundary of $\mathfrak{C}$. 

Note that after shifting, we can assume that $C_0^\bullet= P[w]^\bullet=P_0[w]^\bullet$.

Let $k\geq 1$ be fixed but arbitrary, and assume that for all $0\leq j\leq k-1$, we have $C_j^\bullet = P_j[w]^\bullet$. Once again, from the general structure of the component $\mathfrak{C}$ as shown in Figure \ref{fig5}, we get an Auslander-Reiten triangle
\begin{equation*}
 T^{-1}(C_{k-1}^\bullet)\to T^{-1}(\mathrm{cone}(h_{k-1}^\bullet))\to C_{k-1}^\bullet\xrightarrow{h_{k-1}^\bullet} C_{k-1}^\bullet, 
\end{equation*}
for some morphism $h_{k-1}^\bullet:C_{k-1}^\bullet\to C_{k-1}^\bullet$ in $\mathcal{K}^b(\textup{proj-}\A)$. By Lemma \ref{lem4.1}, we can assume that $h_{k-1}^\bullet = f_{w,k-1}^\bullet$. Since $\mathrm{cone}(f_{w,k-1}^\bullet)=P_k[w]^\bullet\oplus T( P_{k-2}[w]^\bullet)=P_k[w]^\bullet\oplus T(C_{k-2}^\bullet)$, we obtain that $C_k^\bullet=P_k[w]^\bullet$. This finishes the proof of Theorem \ref{prop4.2}.

\end{proof}

\begin{example}\label{exam3}

Let $\A$ be the symmetric special biserial $\k$-algebra $\A_3$ as in Figure \ref{fig1}. As noted before, $\A$ belongs to the class $\mathscr{C}$ as in Definition \ref{classC}. Let $w$ be the generalized string $w = \zeta_0^{-1}\cdot \tau_0\cdot \tau_1$. Then the corresponding string complex is of the following form:

\begin{align*}
P[w]^\bullet: 0\to \mathbf{P}_0\to \mathbf{P}_0\oplus \mathbf{P}_1\to \mathbf{P}_2\to 0, && \text{(in degrees $-1$, $0$ and $1$).}
\end{align*}
 By using the notation in Definition \ref{defPk}, we have that $P_0[w]^\bullet = P[w]^\bullet$, and for $k=1,2,3$, the complex $P_k[w]^\bullet$ is of the following form:

\begin{align*}
P_1[w]^\bullet&: 0\to \mathbf{P}_0\to \mathbf{P}_0\oplus \mathbf{P}_1\oplus \mathbf{P}_0\to \mathbf{P}_2\oplus\mathbf{P}_0\oplus \mathbf{P}_1\to \mathbf{P}_2\to 0,\\
&\text{(in degrees $-2$, $-1$, $0$ and $1$);}\\
P_2[w]^\bullet&: 0\to \mathbf{P}_0\to \mathbf{P}_0\oplus \mathbf{P}_1\oplus \mathbf{P}_0\to \mathbf{P}_2\oplus\mathbf{P}_0\oplus \mathbf{P}_1\oplus\mathbf{P}_0\to \mathbf{P}_2\oplus\mathbf{P}_0\oplus \mathbf{P}_1\to \mathbf{P}_2\to 0,\\
&\text{(in degrees $-3$, $-2$, $-1$, $0$ and $1$);}\\
P_3[w]^\bullet&: 0\to \mathbf{P}_0\to \mathbf{P}_0\oplus \mathbf{P}_1\oplus \mathbf{P}_0\to \mathbf{P}_2\oplus\mathbf{P}_0\oplus \mathbf{P}_1\oplus\mathbf{P}_0\to \mathbf{P}_2\oplus\mathbf{P}_0\oplus\mathbf{P}_1\oplus \mathbf{P}_0\to \mathbf{P}_2\oplus \mathbf{P}_0\oplus \mathbf{P}_1\to \mathbf{P}_2\to 0,\\
&\text{(in degrees $-4$, $-3$, $-2$, $-1$, $0$ and $1$).}\\
\end{align*}

\begin{figure}
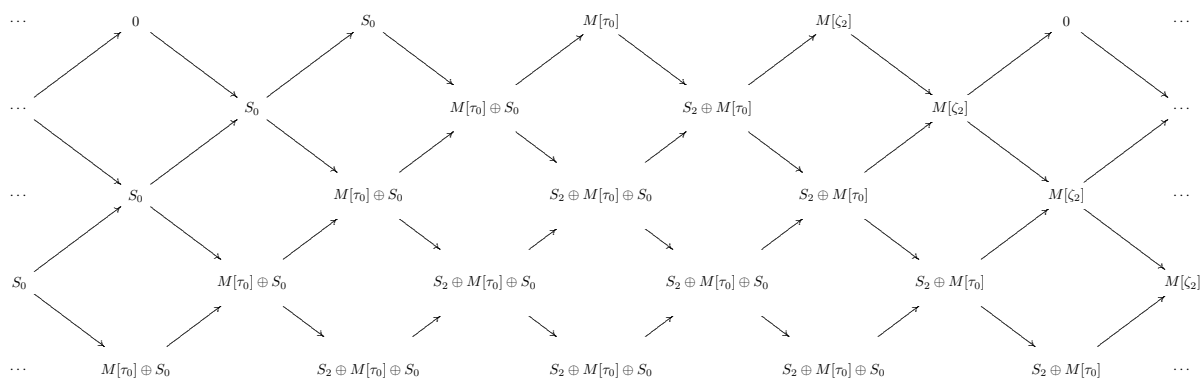

\scalebox{0.55}{
$
\begindc{0}[200]
\obj(-20,7)[2]{$\cdots$}
\obj(-16,7)[3]{$0$}
\obj(-8,7)[4]{$S_0$}
\obj(0,7)[5]{$M[\tau_0]$}
\obj(8,7)[6]{$M[\zeta_2]$}
\obj(16,7)[7a]{$0$}
\obj(20,7)[7b]{$\cdots$}

\obj(-20,4)[7c]{$\cdots$}
\obj(-12,4)[8]{$S_0$}
\obj(-4,4)[9]{$M[\tau_0]\oplus S_0$}
\obj(4,4)[10]{$S_2\oplus M[\tau_0]$}
\obj(12,4)[11]{$M[\zeta_2]$}
\obj(20,4)[11a]{$\cdots$}
\mor{11}{7a}{}
\mor{7a}{11a}{}
\mor{3}{8}{}
\mor{7c}{3}{}
\obj(-20,1)[12a]{$\cdots$}
\obj(-16,1)[12]{$S_0$}
\obj(-8,1)[13]{$M[\tau_0]\oplus S_0$}
\obj(0,1)[14]{$S_2\oplus M[\tau_0]\oplus S_0$}
\obj(8,1)[15]{$S_2\oplus M[\tau_0]$}
\obj(16,1)[16]{$M[\zeta_2]$}
\obj(20,1)[16a]{$\cdots$}

\mor{7c}{12}{}
\mor{12}{8}{}
\mor{11}{16}{}
\mor{16}{11a}{}
\obj(-20,-2)[17a]{$S_0$}
\obj(-12,-2)[17]{$M[\tau_0]\oplus S_0$}
\obj(-4,-2)[18]{$S_2\oplus M[\tau_0]\oplus S_0$}
\obj(4,-2)[19]{$S_2\oplus M[\tau_0]\oplus S_0$}
\obj(12,-2)[20]{$S_2\oplus M[\tau_0]$}
\obj(20,-2)[20a]{$M[\zeta_2]$}
\mor{17a}{12}{}
\mor{12}{17}{}
\mor{20}{16}{}
\mor{16}{20a}{}


\obj(-20,-5)[21a]{$\cdots$}
\obj(-16,-5)[21]{$M[\tau_0]\oplus S_0$}
\obj(-8,-5)[22]{$S_2\oplus M[\tau_0]\oplus S_0$}
\obj(0,-5)[23]{$S_2\oplus M[\tau_0]\oplus S_0$}
\obj(8,-5)[24]{$S_2\oplus M[\tau_0]\oplus S_0$}
\obj(16,-5)[25]{$S_2\oplus M[\tau_0]$}
\obj(20,-5)[25a]{$\cdots$}
\mor{17a}{21}{}
\mor{21}{17}{}
\mor{20}{25}{}
\mor{25}{20a}{}



\mor{8}{4}{}
\mor{4}{9}{}
\mor{9}{5}{}
\mor{5}{10}{}
\mor{10}{6}{}
\mor{6}{11}{}
\mor{8}{13}{}
\mor{13}{9}{}
\mor{9}{14}{}
\mor{14}{10}{}
\mor{10}{15}{}
\mor{15}{11}{}
\mor{17}{13}{}
\mor{13}{18}{}
\mor{18}{14}{}
\mor{14}{19}{}
\mor{19}{15}{}
\mor{15}{20}{}
\mor{17}{22}{}
\mor{22}{18}{}
\mor{18}{23}{}
\mor{23}{19}{}
\mor{19}{24}{}
\mor{24}{20}{}
\enddc
$
}
\caption{Cohomology diagram for the component $\mathfrak{C}$ containing $P[w]^\bullet$ as in Example \ref{exam3}. }\label{fig10}
\end{figure}

The cohomology diagram (as in Theorem \ref{webb} (i)) corresponding to the component $\mathfrak{C}$ containing $P[w]^\bullet$ is shown in Figure \ref{fig10}, where  $M[\tau_0]$ and $M[\zeta_2]$ are the string $\A$-modules corresponding to the strings $\tau_0$ and $\zeta_2$ for $\A$ in the sense of \cite{buri}. Note that the cohomology group $\Sigma$, which Webb calls the {\it stabilizing cohomology group}, is given by $S_2\oplus M[\tau_0]\oplus S_0$. Moreover, from this description it follows that for $k\geq 4$, $P_k[w]^\bullet$ has the following terms: 
\begin{equation*}
P_k[w]^\ell=\begin{cases}
0, &\text{ if $\ell<-(k+1)$ or $\ell>1$,}\\
\mathbf{P}_0, &\text{if $\ell=-(k+1)$,}\\
\mathbf{P}_0\oplus \mathbf{P}_1\oplus \mathbf{P}_0, &\text{if $\ell = -k$,}\\
\mathbf{P}_2\oplus\mathbf{P}_0\oplus \mathbf{P}_1\oplus\mathbf{P}_0, &\text{if $-k +1 \leq \ell \leq -1$,}\\
\mathbf{P}_2\oplus \mathbf{P}_0\oplus \mathbf{P}_1, &\text{if $\ell=0$,}\\
\mathbf{P}_2, &\text{ if $\ell=1$.}
\end{cases}
\end{equation*}
\end{example}

\bibliographystyle{amsplain}
\bibliography{StringComplexesGiraldoRuedaVelez}

\end{document}